\newtheorem{theorem}{Theorem}[section]
\newtheorem{lemma}[theorem]{Lemma}
\newtheorem{proposition}[theorem]{Proposition}
\newtheorem{corollary}[theorem]{Corollary}
\theoremstyle{definition}
\newtheorem{definition}[theorem]{Definition}
\newtheorem{example}[theorem]{Example}
\theoremstyle{remark}
\newtheorem{remark}[theorem]{Remark}
\numberwithin{equation}{section}
\begin{document}
\setcounter{page}{1}

\title[The logarithmic functional mean]{The logarithmic mean of two convex functionals}

\author[M. Ra\"{\i}ssouli and S. Furuichi]{Mustapha Ra\"{\i}ssouli and Shigeru Furuichi$^{*}$}

\address{Mustapha Ra\"{\i}ssouli: Department of Mathematics, Science Faculty, Taibah University, Al Madinah Al Munawwarah, P.O.Box 30097, Zip Code 41477, Saudi Arabia; Department of Mathematics, Science Faculty, Moulay Ismail University, Meknes, Morocco, e-mail: raissouli.mustapha@gmail.com}

\address{$^{*}$ Corresponding author: Shigeru Furuichi, Department of Information Science, College of Humanities and Sciences, Nihon University, 3-25-40, Sakurajyousui, Setagaya-ku, Tokyo, 156-8550, Japan, e-mail: furuichi@chs.nihon-u.ac.jp}

\subjclass[MSC 2020]{46N10, 46A20, 47A63, 47N10}

\keywords{functional means, logarithmic mean of convex functionals, functional inequalities}

\date{Received: xxxxxx; Revised: yyyyyy; Accepted: zzzzzz.}

\begin{abstract}
The purpose of this paper is to introduce the logarithmic mean of two convex functionals that extends the logarithmic mean of two positive operators. Some inequalities involving this functional mean are discussed as well. The operator versions of the functional theoretical results obtained here are immediately deduced without referring to the theory of operator means.
\end{abstract}

\maketitle

\section{\bf Introduction}

The mean theory arises in various contexts and has recently extensive  developments and various applications. It attracts many mathematicians by its interesting inequalities and nice properties. See \cite{FM2020} for recent advances in mathematical inequalities. The mean theory was at the first time introduced for positive real numbers for over the last centuries, \cite{BMV}. Afterwards, it has been extended from positive real numbers to positive operator arguments, see \cite{KA,NUC} for instance.

For over the last few years, many operator means have been extended from the case where the variables are positive operators to the case where the variables are convex functionals, see \cite{ATR,FUJ,RAB,RAC,RAI1,RAI2,RAF}. Such functional extensions were investigated in the sense that if $m(A,B)$ is an operator mean between two positive linear operator $A$ and $B$ acting on a complex Hilbert space $H$, then the extension of $m(A,B)$ when $A$ and $B$ are replaced by two convex functionals $f$ and $g$, respectively, is a functional ${\mathcal F}(f,g)$ that satisfies the following connection-relationship
$${\mathcal F}({\mathcal Q}_A,{\mathcal Q}_B)={\mathcal Q}_{m(A,B)}.$$
Here the notation ${\mathcal Q}_A$ refers to the convex quadratic function generated by the positive linear operator $A$ i.e. ${\mathcal Q}_A(x)=(1/2)\langle Ax,x\rangle$ for all $x\in H$.

This functional approach, which was investigated under a convex point of view, stems its importance in different facts. First, its related results can be proved in a fast and nice way by virtue of the convex character of the functional approach. Second, its related operator version, which coincides with the previous one, can be immediately deduced without referring to the techniques of the operator mean theory. Third, as it is well known the operator mean theory has been investigated when the involved operators act on a Hilbert space. However, the functional mean theory works when the referential space is just a locally convex topological vector space $E$, and specially if $E$ is a real or complex normed vector space. In this paper, this latter point will be explored and explained in a detailed manner.\\

The present manuscript will be organized as follows: Section 2 is devoted to state some basic notions from convex analysis that will be needed throughout the next sections. Section 3 deals with the primordial and typical example of convex functional generated by a positive linear operator. In Section 4 we recall some means with functional arguments that were recently investigated in the literature. Section 5 discusses further properties about the three standard arithmetic/harmonic/geometric functional means. Section 6 displays the logarithmic mean with convex functional variables that extends the logarithmic mean of positive operators. Section 7 deals with some inequalities involving the previous logarithmic functional mean. As already pointed before, the operator versions of all functional results obtained in this paper are immediately deduced without any more tools. Otherwise, the present work highlights the importance of the convex analysis when applied to the theory of operator/functional means.

\section{\bf Background material from convex analysis}

We collect in this section some basic notions and results about the Fenchel duality in convex analysis. For more details, we refer the interested reader to \cite{AUB,EKT,HIU,LAU,ROC} for instance.

Let $E$ be a real or complex locally convex topological vector space and $E^*$ its topological dual. The notation $\langle.,.\rangle$ refers to the bracket duality between $E$ and $E^*$. Throughout the following, we set
$$\widetilde{\mathbb R}=:{\mathbb R}\cup\{+\infty\},\;\; \overline{\mathbb R}=:{\mathbb R}\cup\{-\infty,+\infty\}.$$
We also denote by $\widetilde{\mathbb R}^E$ the set of all functionals defined from $E$ into $\widetilde{\mathbb R}$.

$\bullet$ As usual in convex analysis, we extend here the structure of the field ${\mathbb R}$ to $\overline{\mathbb R}$ by setting, for any $a\in\overline{\mathbb R}$,
$$a+(+\infty)=+\infty,\; (+\infty)-(+\infty)=+\infty,\; 0.(+\infty)=+\infty,$$
and the total order of ${\mathbb R}$ is extended to $\overline{\mathbb R}$ by, $a\leq b$ if and only if $b-a\geq 0$, with the usual convention $-\infty\leq a\leq+\infty$, for any $a,b\in\overline{\mathbb R}$. We pay attention here to the fact that $a\leq b$ is not equivalent to $a-b\leq0$, by virtue of the convention $(+\infty)-(+\infty)=+\infty$.

$\bullet$ Let $f:E\longrightarrow\overline{\mathbb R}$ be a given functional. As usual, we say that $f$ is convex if $$f\big((1-t)x+ty\big)\leq(1-t)f(x)+tf(y)$$
whenever $x,y\in E$ and $t\in[0,1]$. For a subset $C$ of $E$, we denote by $\Psi_C:E\longrightarrow\widetilde{\mathbb R}$ the indicator function of $C$ defined by $\Psi_C(x)=0$ if $x\in C$ and $\Psi_C(x)=+\infty$ else. It is easy to see that the set $C$ is convex if and only if $\Psi_C$ is a convex functional. Further if $C$ is convex then, $f$ is convex on $C$ if and only if $f+\Psi_C$ is convex. By virtue of the definition of the indicator function and its properties, it is henceforth enough to consider functionals defined on the whole space $E$.

$\bullet$ We denote by $dom\;f$ the effective domain of $f$ defined by $dom\;f=\{x\in E:\; f(x)<+\infty\}$ and we say that $f$ is proper if $f$ does not take the value $-\infty$ and $f$ is not identically equal to $+\infty$. Clearly, if $f$ is proper then $dom\;f\neq\emptyset$. Further, if $f$ is a convex functional then $dom\;f$ is a convex set, but the converse is not always true. For example, if $E$ is a normed space and we take $f(x)=-\|x\|$ then $dom\;f=E$ is a convex set while $f$ is not a convex function.

$\bullet$ The notation $\Gamma_0(E)$ stands for the set of all convex lower semi-continuous (l.s.c) proper functionals defined on $E$. It is well known that $f\in\Gamma_0(E)$ if and only if the epigraph of $f$, namely $epi(f)=:\{(x,\lambda)\in E\times{\mathbb R}:\; f(x)\leq\lambda\}$, is convex and closed in $E\times{\mathbb R}$. It is not hard to see that if $C$ is a nonempty convex closed subset of $E$ then $\Psi_C$ belongs to $\Gamma_0(E)$, and vice versa. It is easy to check that $\Gamma_0(E)$ is a convex cone of $\tilde{\mathbb R}^E$. That is, if $f,g\in\Gamma_0(E)$ and $\alpha\geq0$ is a real number then $f+g\in\Gamma_0(E)$ and $\alpha.f\in\Gamma_0(E)$.

$\bullet$ Let $f:E\longrightarrow\overline{\mathbb R}$. The Fenchel conjugate (or dual) of $f$ is the functional $f^*:E^*\longrightarrow\overline{\mathbb R}$ defined by
\begin{equation*}
\forall x^*\in E^*\;\;\;\;\; f^*(x^*)=:\sup_{x\in E}\Big\{\Re e\langle x^*,x\rangle-f(x)\Big\}=\sup_{x\in {\rm dom\;f}}\Big\{\Re e\langle x^*,x\rangle-f(x)\Big\}.
\end{equation*}
It is worth mentioning that, if $f(x_0)=-\infty$ for some $x_0\in E$ then $f^*(x^*)=+\infty$ for any $x^*\in E^*$. For fixed $x\in E$, the real-maps $x^*\longmapsto\phi_{x}(x^*)=:\Re e\langle x^*,x\rangle-f(x)$ are linear affine and l.s.c and so $f^*$ is convex and l.s.c as a supremum of a family of convex and l.s.c functionals, even if $f$ is or not convex l.s.c. The following inequality, known as Fenchel inequality,
\begin{equation}\label{105}
\Re e\langle x^*,x\rangle\leq f(x)+f^*(x^*)
\end{equation}
holds for any $x\in E$ and $x^*\in E^*$. It is easy to check that $(f+c)^*=f^*-c$ for any $c\in{\mathbb R}$. Further, if for $\alpha>0$ we define $(\alpha.f)(x)=:\alpha f(x)$ and $(f.\alpha)(x)=:\alpha f(x/\alpha)$, then one can easily check that
\begin{equation}\label{110}
\big(\alpha.f\big)^*=f^*.\alpha\;\; \mbox{and}\;\; \big(f.\alpha\big)^*=\alpha.f^*.
\end{equation}
The duality map $f\longmapsto f^*$ is point-wisely increasing and convex. That is, for any $f,g\in\widetilde{\mathbb R}^E$ and $t\in[0,1]$ we have, $f\leq g\Longrightarrow g^*\leq f^*$ and
\begin{equation}\label{pc}
\Big((1-t)f+tg\Big)^*\leq(1-t)f^*+tg^*,
\end{equation}
where the notation $f\leq g$ refers to the partial point-wise order defined by: $f\leq g$ if and only if $f(x)\leq g(x)$ i.e. $g(x)-f(x)\geq0$, for any $x\in E$.

$\bullet$ For $f:E\longrightarrow\overline{\mathbb R}$ we denote by $f^{**}$ the bi-conjugate (or bi-dual) of $f$ defined from $E$ into $\overline{\mathbb R}$ by
$$\forall x\in E\;\;\;\;\;\;\; f^{**}(x)=:\sup_{x^*\in E^*}\Big\{\Re e\langle x^*,x\rangle-f^*(x^*)\Big\}.$$
It is worth mentioning that, by definition, $f^{**}$ is not the conjugate of $f^*$, and so whenever we speak for the Fenchel duality, we use only the duality $\langle.,.\rangle$ between $E$ and $E^*$ and that between $E^*$ and $E^{**}$ is omitted, unless $E$ is a reflexive Banach space i.e. $E=E^{**}$ as Banach spaces case for which the two preceding dualities coincide. The following inequality $f^{**}\leq f$ holds true. We sometimes call $f^{**}$ the convex closure of $f$ i.e. $f^{**}$ is the greatest convex l.s.c function less than $f$. Further, $f\in\Gamma_0(E)$ if and only if $f^{**}=f$.

$\bullet$ Let $x\in dom\;f$. The notation $\partial f(x)$ refers to the sub-differential of $f$ at $x$ defined by
$$\partial f(x)=:\Big\{x^*\in E^*:\; f(z)\geq f(x)+\Re e\langle x^*,z-x\rangle,\;\; \mbox{for any}\; z\in E\Big\}.$$
Note that, $\partial f(x)$ is a subset of $E^*$ always convex and closed but possibly empty. However, if the topological interior of $dom f$, denoted by $int\big(dom f\big)$, is not empty then $\partial f(x)\neq\emptyset$ for any $x\in int\big(dom f\big)$. The following equivalence
$$x^*\in\partial f(x)\Longleftrightarrow f(x)+f^*(x^*)=\Re e\langle x^*,x\rangle$$
holds true. If $x^*\in\partial f(x)$ then $x\in\partial f^*(x*)$, with reverse implication provided that $f\in\Gamma_0(E)$.

$\bullet$ Let $x\in dom\;f$ and $d\in  E$. The directional derivative of $f$ in the direction $d$ at $x$ is defined by
$$f^{'}(x,d)=:\lim_{t\downarrow0}\frac{f(x+td)-f(x)}{t},$$
provided that this limit exists. We say that $f$ is G\^{a}teaux-differentiable, in short G-differentiable, at $x$ if the map $d\longmapsto f^{'}(x,d)$ is linear continuous, i.e. $f^{'}(x,d)=\langle z^*,d\rangle$ for some $z^*\in E^*$. Such $z^*$ is unique and we set $z^*=:\nabla f(x)$ which is called the G-gradient of $f$ at $x$. If $f$ is convex and G-differentiable at $x$ then $\partial f(x)=\{\nabla f(x)\}$. Inversely, if $f$ is convex and $\partial f(x)$ is a singleton then $f$ is G-differentiable at $x$ and $\partial f(x)=\{\nabla f(x)\}$.

$\bullet$ Let $f,g\in\widetilde{\mathbb R}^E$. The inf-convolution of $f$ and $g$ is defined by
$$\forall x\in E\;\;\;\;\;\;\; f\Box g(x)=:\inf_{z\in E}\{f(z)+g(x-z)\}.$$
It is well known that $dom(f\Box g)=dom\;f+dom\;g$. Further, if $f$ and $g$ are convex then so is $f\Box g$, but $f,g\in\Gamma_0(E)$ does not imply $f\Box g\in\Gamma_0(E)$. Otherwise, the following relationship holds true
\begin{equation}\label{115}
(f\Box g)^*=f^*+g^*.
\end{equation}
However, the relationship $(f+g)^*=f^*\Box g^*$ is not always true. Note that, by \eqref{115} we have $\big(f^*+g^*\big)^*=\big(f\Box g\big)^{**}$. Therefore,
\begin{equation}\label{120}
\big(f^*+g^*\big)^*=f\Box g\;\Longleftrightarrow\;f\Box g\in\Gamma_0(E).
\end{equation}
In the literature, we can find a list of assumptions under what the condition $f\Box g\in\Gamma_0(E)$ holds. For example, if the following condition
\begin{equation}\label{125}
int\big(dom\;f^*\big)\cap dom\;g^*\neq\emptyset\;\; \mbox{or}\;\; dom\;f^*\cap int\big(dom\;g^*\big)\neq\emptyset,
\end{equation}
holds then $f\Box g\in\Gamma_0(E)$. For more information and details about this latter point, we refer the interested reader to \cite{HIU,LAU,ROC} for instance.

\section{\bf Generated function by linear operator}

In this section, we will consider a typical and interesting example of a convex functional generated by a linear operator. Here, $H$ denotes a real or complex Hilbert space. Following the Riesz representation, the bracket duality here is the inner product of $H$, also denoted by $\langle.,.\rangle$. We then denote by ${\mathcal B}(H)$ the space of all bounded linear operators defined from $H$ into itself. For $A\in{\mathcal B}(H)$, we say that $A$ is positive, and we write $A\geq0$, if $\langle Ax,x\rangle\geq0$ for any $x\in H$. The positiveness of operators generates a partial order between self-adjoint operators defined by: $A\leq B$ if and only if $A$ and $B$ are self-adjoint and $B-A\geq0$. We say that $A$ is strictly positive, and we write $A>0$, if $A$ is positive and invertible. If $H$ is a finite dimensional space then, $A$ is strictly positive if and only if $\langle Ax,x\rangle>0$ for any $x\in E$, with $x\neq0$. We denote by ${\mathcal B}^{+*}(H)$ the set of all positive invertible operators in ${\mathcal B}(H)$.

For every $A\in{\mathcal B}(H)$ we can derive a functional ${\mathcal Q}_A$ defined by
$$\forall x\in H\;\;\;\;\;\;\; {\mathcal Q}_A(x)=\frac{1}{2}\langle Ax,x\rangle,$$
which will be called the quadratic function generated by $A$. Note that, as we will see later, the coefficient $(1/2)$ appearing in ${\mathcal Q}_A$ will play a good tool for a symmetrization reason when computing the conjugate of ${\mathcal Q}_A$. It is clear that ${\mathcal Q}_I(x)=:\frac{1}{2}\|x\|^2$, where $I$ is the identity operator of $H$ and $\|.\|$ is the hilbertian norm of $H$.

The elementary properties of ${\mathcal Q}_A$ are summarized in the following proposition.

\begin{proposition}\label{prEl}
Let $A,B\in{\mathcal B}(H)$. Then the following assertions hold:\\
(i) Assume that $A$ and $B$ are self-adjoint. Then, ${\mathcal Q}_A\leq (\geq){\mathcal Q}_B$ if and only if $A\leq (\geq)B$.\\
(ii) ${\mathcal Q}_A+{\mathcal Q}_B={\mathcal Q}_{A+B}$ and $\alpha {\mathcal Q}_A={\mathcal Q}_{\alpha A}$ for any real number $\alpha$.\\
(iii) ${\mathcal Q}_A$ is continuous. Further, ${\mathcal Q}_A$ is convex if and only if $A$ is positive.\\
(iv) Assume that $A\in{\mathcal B}^{+*}$(H). Then the conjugate of ${\mathcal Q}_A$ is given by
$$\forall x^*\in E^*\;\;\;\;\;\;\; {\mathcal Q}_A^*(x^*)=\frac{1}{2}\langle A^{-1}x^*,x^*\rangle,$$
or, in short,
\begin{equation}\label{inv}
{\mathcal Q}_A^*={\mathcal Q}_{A^{-1}}.
\end{equation}
(v) ${\mathcal Q}_A$ is G-differentiable at any $x\in H$. If further $A$ is self-adjoint then $\nabla {\mathcal Q}_A(x)=Ax$. So, $\partial f(x)=\{Ax\}$ whenever $A$ is (self-adjoint) positive.\\
(vi) Let $A,B\in{\mathcal B}^{+*}$(H). Then ${\mathcal Q}_A\square{\mathcal Q}_B={\mathcal Q}_{A//B}$ where $A//B=:\big(A^{-1}+B^{-1}\big)^{-1}$ is called the parallel sum of $A$ and $B$.
\end{proposition}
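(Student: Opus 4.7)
The plan is to handle the six items essentially in order, since each is a short computation from the definition of $\mathcal{Q}_A$; assertions (iv) and (vi) are the most substantial, and I would organize the proof so that (iv) precedes (vi), which is then deduced from the Fenchel duality machinery of Section~2 rather than by direct minimization.

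For items (i)--(iii) I would work straight from $\mathcal{Q}_A(x) = \tfrac12\langle Ax, x\rangle$. (i) and (ii) are immediate from the defining inequality $\langle (B-A)x, x\rangle \geq 0$ and from linearity of the inner product in the first slot. For (iii), continuity follows from boundedness of $A$, and the convexity equivalence from the algebraic identity
\[
(1-t)\mathcal{Q}_A(x) + t\mathcal{Q}_A(y) - \mathcal{Q}_A\big((1-t)x + ty\big) = \frac{t(1-t)}{2}\langle A(x-y), x-y\rangle,
\]
which shows that $\mathcal{Q}_A$ is convex on $H$ iff $\langle Au, u\rangle \geq 0$ for every $u \in H$, i.e.\ iff $A \geq 0$.

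For (iv), under $A \in \mathcal{B}^{+*}(H)$ the functional $x \mapsto \Re e\langle x^*, x\rangle - \tfrac12\langle Ax, x\rangle$ is strictly concave and coercive; the first-order condition gives the unique maximizer $x = A^{-1}x^*$, and substituting back yields $\mathcal{Q}_A^*(x^*) = \tfrac12\langle A^{-1}x^*, x^*\rangle$. For (v), the difference quotient
\[
\frac{\mathcal{Q}_A(x + td) - \mathcal{Q}_A(x)}{t} = \frac{1}{2}\big(\langle Ad, x\rangle + \langle Ax, d\rangle\big) + \frac{t}{2}\langle Ad, d\rangle
\]
tends to a continuous linear functional of $d$ as $t\downarrow 0$, proving G\^{a}teaux-differentiability; self-adjointness of $A$ collapses the limit to $\Re e\langle Ax, d\rangle$, so $\nabla\mathcal{Q}_A(x) = Ax$, and the subdifferential claim follows from the general equality $\partial f(x) = \{\nabla f(x)\}$ for convex G-differentiable $f$ recalled in Section~2.

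For (vi), rather than minimizing in $z$ by hand I would use the conjugation formula \eqref{115} together with (iv) and (ii) to compute
\[
(\mathcal{Q}_A \Box \mathcal{Q}_B)^* = \mathcal{Q}_A^* + \mathcal{Q}_B^* = \mathcal{Q}_{A^{-1}} + \mathcal{Q}_{B^{-1}} = \mathcal{Q}_{A^{-1}+B^{-1}},
\]
whose Fenchel conjugate, again by (iv) applied to $A^{-1}+B^{-1} \in \mathcal{B}^{+*}(H)$, is $\mathcal{Q}_{A//B}$. To upgrade this equality of bi-conjugates to the desired identity $\mathcal{Q}_A \Box \mathcal{Q}_B = \mathcal{Q}_{A//B}$ I would invoke \eqref{120}: since $\mathcal{Q}_A^*$ and $\mathcal{Q}_B^*$ are finite-valued and continuous on the whole of $H$, the condition \eqref{125} is trivially met, hence $\mathcal{Q}_A \Box \mathcal{Q}_B \in \Gamma_0(H)$. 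The main obstacle I anticipate is a clean handling of the complex case in (iv) and (v), specifically ensuring that $\Re e\langle x^*, x\rangle$ plays the right role in the first-order condition and that $\langle Ad, x\rangle + \langle Ax, d\rangle$ reduces to $2\,\Re e\langle Ax, d\rangle$ under self-adjointness; elsewhere the calculations are routine given Section~2.
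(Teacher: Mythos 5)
Your proof is correct, and it is more self-contained than the paper's, which simply declares (i), (ii), (iii), (v) straightforward and defers (iv) to \cite{RLAMA} and (vi) to \cite{AND1,ANT}. The main substantive difference is in (vi): the references establish ${\mathcal Q}_A\Box{\mathcal Q}_B={\mathcal Q}_{A//B}$ by the direct variational characterization of the parallel sum (minimizing $\langle Az,z\rangle+\langle B(x-z),x-z\rangle$ over $z$), whereas you deduce it from (iv) by conjugating twice, using \eqref{115} to get $({\mathcal Q}_A\Box{\mathcal Q}_B)^*={\mathcal Q}_{A^{-1}+B^{-1}}$ and then \eqref{120}--\eqref{125} (trivially satisfied since ${\mathcal Q}_A^*$ and ${\mathcal Q}_B^*$ are everywhere finite and continuous) to remove the biconjugate. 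This is a genuinely cleaner route that also illustrates the Fenchel machinery the paper relies on later, at the cost of needing the qualification condition; the direct minimization avoids that but requires solving the operator first-order condition by hand. Two small polish points: in (iv) you can sidestep the existence-of-maximizer discussion entirely by completing the square, $\Re e\langle x^*,x\rangle-\tfrac12\langle Ax,x\rangle=\tfrac12\langle A^{-1}x^*,x^*\rangle-\tfrac12\langle A(x-A^{-1}x^*),x-A^{-1}x^*\rangle$, which exhibits the supremum and its attainment simultaneously; and in (iii) and (v) note that for complex $H$ and non-self-adjoint $A$ the functional ${\mathcal Q}_A$ need not be real-valued, so the convexity and G-differentiability statements are really only meaningful in the self-adjoint (resp.\ positive) case --- a defect of the statement itself rather than of your argument, which handles the relevant case correctly via $\langle Ad,x\rangle+\langle Ax,d\rangle=2\,\Re e\langle Ax,d\rangle$.
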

\begin{proof}
The proofs of (i),(ii),(iii) and (v) are straightforward. For the proof of (iv), see \cite{RLAMA} for instance. For more details about (vi) we can consult \cite{AND1,ANT}.
\end{proof}

\begin{remark}
By \eqref{inv} we immediately deduce that ${\mathcal Q}_I^*={\mathcal Q}_I=:(1/2)\|.\|^2$. That is, $(1/2)\|.\|^2$ is self-conjugate. By using the Fenchel inequality \eqref{105} it is not hard to check that $(1/2)\|.\|^2$ is the unique self-conjugate functional defined on a Hilbert space.
\end{remark}

We have the following result as well.

\begin{theorem}\label{thF}
Let $\Phi:\Gamma_0(H)\times\Gamma_0(H)\longrightarrow\widetilde{\mathbb R}^H$ and $\Psi:\Gamma_0(H)\times\Gamma_0(H)\longrightarrow\widetilde{\mathbb R}^H$ be two binary maps such that $\Phi(f,g)\leq\Psi(f,g)$ for any $f,g\in\Gamma_0(H)$. Assume that, for any $A,B\in{\mathcal B}^{+*}(H)$ we have
$$\Phi({\mathcal Q}_A,{\mathcal Q}_B)={\mathcal Q}_{\theta(A,B)}\;\; \mbox{and}\;\; \Psi({\mathcal Q}_A,{\mathcal Q}_B)={\mathcal Q}_{\gamma(A,B)},$$
where $\theta(A,B),\gamma(A,B)\in{\mathcal B}(H)$ are self-adjoint. Then
$$\theta(A,B)\leq\gamma(A,B).$$
\end{theorem}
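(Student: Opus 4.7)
The plan is essentially a direct application of the elementary properties of $\mathcal{Q}_A$ established in Proposition \ref{prEl}, together with the hypothesis $\Phi \le \Psi$. I would proceed in three short steps.

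First, I would verify that the hypothesis on $\Phi$ and $\Psi$ can be applied at $(\mathcal{Q}_A,\mathcal{Q}_B)$ for $A,B \in \mathcal{B}^{+*}(H)$. By Proposition \ref{prEl}(iii), since $A$ (resp.\ $B$) is positive, the quadratic functional $\mathcal{Q}_A$ (resp.\ $\mathcal{Q}_B$) is convex; it is continuous hence lower semi-continuous, and it is real-valued hence proper. Therefore $\mathcal{Q}_A,\mathcal{Q}_B \in \Gamma_0(H)$, so the assumption $\Phi(f,g)\leq\Psi(f,g)$ applies with $f=\mathcal{Q}_A$ and $g=\mathcal{Q}_B$, yielding the point-wise inequality
\[
\mathcal{Q}_{\theta(A,B)} = \Phi(\mathcal{Q}_A,\mathcal{Q}_B) \leq \Psi(\mathcal{Q}_A,\mathcal{Q}_B) = \mathcal{Q}_{\gamma(A,B)}.
\]

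Second, I would invoke Proposition \ref{prEl}(i) to convert this inequality between quadratic functionals into an inequality between the underlying self-adjoint operators. Since by assumption $\theta(A,B)$ and $\gamma(A,B)$ are both self-adjoint, the equivalence ${\mathcal Q}_{\theta(A,B)}\leq{\mathcal Q}_{\gamma(A,B)} \iff \theta(A,B) \leq \gamma(A,B)$ gives the desired conclusion.

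There is really no substantive obstacle here: the whole content of the theorem is the observation that the correspondence $A \mapsto \mathcal{Q}_A$ is an order embedding from the self-adjoint part of $\mathcal{B}(H)$ into $\widetilde{\mathbb{R}}^H$, sending $\mathcal{B}^{+*}(H)$ into $\Gamma_0(H)$. The only point that requires minimal attention is ensuring $\mathcal{Q}_A,\mathcal{Q}_B \in \Gamma_0(H)$ so that the hypothesis on $\Phi$ and $\Psi$ can legitimately be evaluated at them; this is automatic from Proposition \ref{prEl}(iii). The theorem will then serve as the standard device for extracting operator inequalities from functional inequalities throughout the subsequent sections.
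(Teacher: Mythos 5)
Your proposal is correct and follows exactly the paper's own argument: verify $\mathcal{Q}_A,\mathcal{Q}_B\in\Gamma_0(H)$ via Proposition \ref{prEl}(iii), apply the hypothesis $\Phi\leq\Psi$ to get $\mathcal{Q}_{\theta(A,B)}\leq\mathcal{Q}_{\gamma(A,B)}$, and conclude with the order-embedding property of Proposition \ref{prEl}(i). Nothing is missing.
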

\begin{proof}
Since $A,B\in{\mathcal B}^{+*}(H)$ then, by Proposition \ref{prEl},(iii) we have ${\mathcal Q}_A,{\mathcal Q}_B\in\Gamma_0(H)$. It follows that $\Phi({\mathcal Q}_A,{\mathcal Q}_B)\leq\Psi({\mathcal Q}_A,{\mathcal Q}_B)$ and so ${\mathcal Q}_{\theta(A,B)}\leq {\mathcal Q}_{\gamma(A,B)}$, with $\theta(A,B)$ and $\gamma(A,B)\in{\mathcal B}(H)$ are self-adjoint. By Proposition \ref{prEl},(i) we conclude that $\theta(A,B)\leq\gamma(A,B)$ and the proof is complete.
\end{proof}

Theorem \ref{thF} is a simple and central result which will be substantially used throughout this paper. It shows how to obtain an operator inequality from an inequality involving convex functionals. The following examples give more explanation about the use of this theorem as well as the preceding properties and concepts. Further examples of interest will be seen in the next sections.

\begin{example}
Let $A,B\in{\mathcal B}^{+*}(H)$. Assume that $A\leq B$ then, by Proposition \ref{prEl},(i), we have ${\mathcal Q}_A\leq {\mathcal Q}_B$. By the point-wise decrease monotonicity of the Fenchel duality we infer that ${\mathcal Q}_B^*\leq {\mathcal Q}_A^*$ and by \eqref{inv} we deduce that ${\mathcal Q}_{B^{-1}}\leq {\mathcal Q}_{A^{-1}}$. Again by Proposition \ref{prEl},(i) we conclude that $B^{-1}\leq A^{-1}$. This means that the map $X\longmapsto X^{-1}$, for $X\in{\mathcal B}^{+*}(H)$, is operator monotone (increasing).
\end{example}
\begin{example}
For fixed $t\in[0,1]$, we set
$$\Phi(f,g)=\big((1-t)f+tg\big)^*\;\; \mbox{and}\;\; \Psi(f,g)=(1-t)f^*+tg^*.$$
Following \eqref{pc} we have
$\Phi(f,g)\leq\Psi(f,g)$ for any $f,g\in\Gamma_0(H)$. Since $A,B\in{\mathcal B}^{+*}(H)$ then by Proposition \ref{prEl},(iii) one has ${\mathcal Q}_A,{\mathcal Q}_B\in\Gamma_0(H)$. Further, by Proposition \ref{prEl},(ii) and \eqref{inv} we can write
$$\Phi({\mathcal Q}_A,{\mathcal Q}_B)={\mathcal Q}_{\theta(A,B)},\;\; \mbox{with}\;\; \theta(A,B)=\big((1-t)A+tB\big)^{-1},$$
$$\Psi({\mathcal Q}_A,{\mathcal Q}_B)={\mathcal Q}_{\gamma(A,B)},\;\; \mbox{with}\;\; \gamma(A,B)=(1-t)A^{-1}+tB^{-1}.$$
According to Theorem \ref{thF} we conclude that $\theta(A,B)\leq\gamma(A,B)$. This means that the map $X\longmapsto X^{-1}$, for $X\in{\mathcal B}^{+*}(H)$, is operator convex.
\end{example}

\section{\bf Functional means}

In this section we will recall some functional means already investigated in the literature. Throughout this section and the next ones, $E$ denotes a real or complex topological locally convex vector space, as previous, and $H$ denotes a real or complex Hilbert space.

Let $(f,g)\in \Gamma_0(E)$ and $\lambda\in(0,1)$. The following expressions, \cite{RAB}
\begin{eqnarray}
&& f\nabla_{\lambda}g=:(1-\lambda)f+\lambda g,\;  f!_{\lambda}g=:\Big((1-\lambda)f^*+\lambda g^*\Big)^*,\nonumber \\
&& f\sharp_{\lambda}g=:\displaystyle{\frac{\sin(\pi\lambda)}{\pi}\int_{0}^1\frac{t^{\lambda-1}}{(1-t)^{\lambda}}}f!_tg\;dt\label{420}
\end{eqnarray}
are known as the $\lambda$-weighted functional arithmetic mean, the $\lambda$-weighted functional harmonic mean and the $\lambda$-weighted functional geometric mean of $f$ and $g$, respectively. For $\lambda=1/2$, they are simply denoted by $f\nabla g,\; f!g$ and $f\sharp g$, respectively. For another definition of $f\sharp g$ as point-wise limit of an algorithm descending from $f\nabla g$ and $f!g$ we can consult \cite{ATR}.

\begin{remark}
(i) The $\lambda$-weighted functional geometric mean can be written as follows:
$$f\sharp_{\lambda}g=\int_0^1f!_tgd\nu_{\lambda}(t),\;\; \lambda\in(0,1),$$
where $\nu_{\lambda}(t)$ defines a family of probability measures on the interval $(0,1)$ defined by
\begin{equation}\label{425}
d\nu_{\lambda}(t)=:\frac{\sin(\pi\lambda)}{\pi}\frac{t^{\lambda-1}}{(1-t)^{\lambda}}dt,\;\; \lambda\in(0,1).
\end{equation}
(ii) Although the previous functional means can be defined, by the same expressions, even $f,g\notin\Gamma_0(E)$, we restrict ourselves throughout this paper to assume that $f,g\in\Gamma_0(E)$. In this case, $f\nabla_{\lambda}g,f!_{\lambda}g$ and $f\sharp_{\lambda}g$ belong to $\Gamma_0(E)$ provided that $dom\;f\cap dom\;g\neq\emptyset$.
\end{remark}

We extend the previous functional means on the whole interval $[0,1]$ by setting:
\begin{equation}\label{430}
f\nabla_0g=f!_0g=f\sharp_0g=f\;\; \mbox{and}\;\; f\nabla_1g=f!_1g=f\sharp_1g=g.
\end{equation}
We pay attention that these latter relations can not be deduced from \eqref{420}, by virtue of the convention $0.(+\infty)=+\infty$. The previous functional means satisfy the following relationships
\begin{equation}\label{435}
f\nabla_{\lambda}g=g\nabla_{1-\lambda}f,\; f!_{\lambda}g=g!_{1-\lambda}f,\; f\sharp_{\lambda}g=g\sharp_{1-\lambda}f,
\end{equation}
for any $\lambda\in[0,1]$. The two first relationships of \eqref{435} are immediate and for the third one we can consult \cite{RAB}. In particular, if $\lambda=1/2$, the three previous functional means are symmetric in $f$ and $g$. Note that, $f\nabla_{\lambda}f=f!_{\lambda}f=f\sharp_{\lambda}f=f$. Further, the following inequalities hold, see \cite{RAB}
\begin{equation}\label{440}
f!_{\lambda}g\leq f\sharp_{\lambda}g\leq f\nabla_{\lambda}g,
\end{equation}
and $f\nabla_{\lambda}g, f!_{\lambda}g,f\sharp_{\lambda}g\in\Gamma_0(E)$ provided that $dom\;f\cap dom\;g\neq\emptyset$. Denoting by $\it{m}_\lambda$ one of any mean among $\nabla_\lambda,\;!_\lambda,\;\sharp_\lambda$ and utilizing \eqref{110}, we can easily see that, for any $\alpha>0$,
\begin{equation}\label{445}
\alpha.f\it{m}_\lambda\alpha.g=\alpha.\big(f\it{m}_\lambda g\big),\;\; \mbox{and}\;\; f.\alpha\it{m}_\lambda g.\alpha=\big(f\it{m}_\lambda g\big).\alpha
\end{equation}

Otherwise, for any $A,B\in{\mathcal B}^{+*}(H)$, we have the following relationships
\begin{equation}\label{450}
{\mathcal Q}_A\nabla_{\lambda}{\mathcal Q}_B={\mathcal Q}_{A\nabla_{\lambda}B},\; {\mathcal Q}_A!_{\lambda}{\mathcal Q}_B={\mathcal Q}_{A!_{\lambda}B},\; {\mathcal Q}_A\sharp_{\lambda}{\mathcal Q}_B={\mathcal Q}_{A\sharp_{\lambda}B},
\end{equation}
where
\begin{multline}\label{455}
A\nabla_{\lambda}B=:(1-\lambda)A+\lambda B,\; A!_{\lambda}B=:\Big((1-\lambda)A^{-1}+\lambda B^{-1}\Big)^{-1},\\
A\sharp_{\lambda}B=:A^{1/2}\Big(A^{-1/2}BA^{-1/2}\Big)^{\lambda}A^{1/2}
\end{multline}
stands for the $\lambda$-weighted operator arithmetic mean, the $\lambda$-weighted operator harmonic mean and the $\lambda$-weighted operator geometric mean of $A$ and $B$, respectively. For $\lambda=1/2$, they are also simply denoted by $A\nabla B,\; A!B$ and $A\sharp B$, respectively. The relationships \eqref{450} justify that the previous functional means are, respectively, extensions of their related operator means. Further, according to Theorem \ref{thF}, \eqref{440} and \eqref{450} immediately imply that the following operator inequalities
\begin{equation}\label{460}
A!_{\lambda}B\leq A\sharp_{\lambda}B\leq A\nabla_{\lambda}B
\end{equation}
hold for any $A,B\in{\mathcal B}^{+*}(H)$ and $\lambda\in[0,1]$. It is worth mentioning that \eqref{460}, which are well-known in the operator mean theory, are here again obtained in a simultaneous manner and under a convex point of view that does not need to refer to the techniques of functional calculus.

\section{\bf More properties for $f\nabla_{\lambda}g,\;f!_{\lambda}g,\;f\sharp_{\lambda}g$}

In the ongoing section, we will be interested by studying other properties of the functional means $f\nabla_{\lambda}g,\;f!_{\lambda}g$ and $f\sharp_{\lambda}g$. First, let $A,B\in{\mathcal B}^{+*}(H)$. It is well known that $A\nabla_\lambda B,A!_\lambda B$ and $A\sharp_\lambda B$ are monotone increasing with respect to both $A$ and $B$ \cite{Hiai2010}. Otherwise, obviously the map $(A,B)\longmapsto A\nabla_\lambda B$ is linear affine while $(A,B)\longmapsto A!_\lambda B$ is operator concave, see \cite{AND}, and so $(A,B)\longmapsto A\sharp_\lambda B$ is also operator concave. In what follows we will present the extensions of these latter operator properties for convex functionals. It is clear that $(f,g)\longmapsto f\nabla_\lambda g$ is linear affine and point-wisely increasing with respect to $f$ and $g$. We now state the following result.

\begin{proposition}\label{prPM}
Let $f,g\in\Gamma_0(E)$ and $\lambda\in[0,1]$. Then the two binary maps $(f,g)\longmapsto f!_{\lambda}g$ and $(f,g)\longmapsto f\sharp_{\lambda}g$ are both separately point-wisely increasing.
\end{proposition}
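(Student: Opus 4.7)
The plan is to reduce both assertions to the pointwise decreasingness of the Fenchel conjugation recalled in Section 2, namely $u\leq v\Longrightarrow v^{*}\leq u^{*}$, applied twice for the harmonic case and once-then-integrated for the geometric case.

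For the harmonic mean $!_\lambda$, fix $g\in\Gamma_0(E)$ and suppose $f_1,f_2\in\Gamma_0(E)$ with $f_1\leq f_2$. The first application of conjugation gives $f_2^*\leq f_1^*$. Adding $\lambda g^*$ to both sides and taking the convex combination with weight $1-\lambda\geq 0$ on $f_i^*$ preserves the inequality:
$$(1-\lambda)f_2^{*}+\lambda g^{*}\;\leq\;(1-\lambda)f_1^{*}+\lambda g^{*}.$$
A second application of conjugation reverses the inequality and yields
$$f_1!_\lambda g=\bigl((1-\lambda)f_1^{*}+\lambda g^{*}\bigr)^{*}\;\leq\;\bigl((1-\lambda)f_2^{*}+\lambda g^{*}\bigr)^{*}=f_2!_\lambda g,$$
which is the desired separate monotonicity in the first slot. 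Monotonicity in $g$ with $f$ fixed follows either by the same computation (now adding $(1-\lambda)f^{*}$ to the comparison $g_1^{*}\geq g_2^{*}$) or, more economically, from the symmetry $f!_\lambda g=g!_{1-\lambda}f$ of \eqref{435}. The boundary cases $\lambda\in\{0,1\}$ are settled at once by the convention \eqref{430}.

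For the geometric mean $\sharp_\lambda$, I would invoke the integral representation
$$f\sharp_\lambda g\;=\;\int_{0}^{1}f!_t g\,d\nu_{\lambda}(t),$$
where $\nu_\lambda$ is the probability measure \eqref{425} on $(0,1)$. The integrand is understood pointwise in $x\in E$: for each $x$ and each $t\in(0,1)$ the quantity $(f!_t g)(x)$ is, by the previous paragraph applied with weight $t$ in place of $\lambda$, monotone in $f$. Since $d\nu_\lambda$ is a positive measure, integration preserves pointwise inequalities, giving $f_1\sharp_\lambda g\leq f_2\sharp_\lambda g$ whenever $f_1\leq f_2$. Monotonicity in the second slot follows from the symmetry $f\sharp_\lambda g=g\sharp_{1-\lambda}f$ in \eqref{435}, and the endpoint values $\lambda=0,1$ are again covered by \eqref{430}.

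The only potential pitfall is technical and is handled by the framework set up in Section 2: the arithmetic $+\infty-(+\infty)=+\infty$ makes pointwise inequalities non-cancellable, so one must be careful to argue only by additions, positive scalar multiplications, and applications of the $u\mapsto u^{*}$ order-reversal, never by subtractions. Each step above stays within these operations, so no genuine obstacle arises and the proof reduces to the two displayed order-reversals plus one integration against a positive measure.
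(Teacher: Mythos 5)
Your proof is correct and follows essentially the same route as the paper's: two applications of the order-reversing Fenchel conjugation for $!_\lambda$, the symmetry relation \eqref{435} for the second argument, and integration of the pointwise inequality against the positive measure $\nu_\lambda$ for $\sharp_\lambda$. Your explicit treatment of the endpoints $\lambda\in\{0,1\}$ and the caution about the extended arithmetic are welcome additions but do not change the argument.
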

\begin{proof}
Let $f_1,f_2\in\Gamma_0(E)$ be such that $f_1\leq f_2$. By the point-wise decrease monotonicity of the map $f\longmapsto f^*$, we deduce $(1-\lambda)f_2^*+\lambda g^*\leq (1-\lambda)f_1^*+\lambda g^*$ and again $\big((1-\lambda)f_1^*+\lambda g^*\big)^*
\leq\big((1-\lambda)f_2^*+\lambda g^*\big)^*$ i.e. $f_1!_{\lambda}g\leq f_2!_{\lambda}g$ which mean that $(f,g)\longmapsto f!_{\lambda}g$ is point-wisely increasing with respect to the first argument $f$. By virtue of \eqref{435} we then deduce that $(f,g)\longmapsto f!_{\lambda}g$ is point-wisely increasing with respect to the second argument $g$, too. This, with the relation of $f\sharp_{\lambda}g$ given in \eqref{420} and the linearity of the integral, implies that $(f,g)\longmapsto f\sharp_{\lambda}g$ is separately point-wisely increasing. The proof is complete.
\end{proof}

In order to give another result of interest, we need to introduce the following notation:
$${\mathcal W}=:\Big\{(f,g)\in\Gamma_0(E)\times\Gamma_0(E):\; dom\;f^*=E^*\; \mbox{and}\; dom\;g^*=E^*\Big\}.$$
Obviously, ${\mathcal W}$ is a cone, with $(f,g)\in{\mathcal W}$ if and only if $(g,f)\in{\mathcal W}$. Note that ${\mathcal Q}_A,{\mathcal Q}_B\in{\mathcal W}$ for any $A,B\in{\mathcal B}^{+*}(H)$. Further, it is easy to check that ${\mathcal W}$ is convex i.e. if $(f_1,g_1)\in{\mathcal W}$ and $(f_2,g_2)\in{\mathcal W}$ then $\big((1-t)f_1+tf_2,(1-t)g_1+tg_2\big)\in{\mathcal W}$ for any $t\in(0,1)$. We now state the following result.

\begin{theorem}\label{thPC}
Let $(f,g)\in{\mathcal W}$ and $\lambda\in[0,1]$. Then the two binary maps $(f,g)\longmapsto f!_{\lambda}g$ and $(f,g)\longmapsto f\sharp_{\lambda}g$ are both point-wisely concave.
\end{theorem}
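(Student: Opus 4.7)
The plan is to first derive an explicit inf-convolution representation of $f!_\lambda g$ valid on $\mathcal{W}$, read pointwise concavity directly off that representation, and finally transfer the result to $\sharp_\lambda$ via its integral formula. Concretely, I would fix $(f,g)\in\mathcal{W}$ and $\lambda\in(0,1)$ and use the scaling rules \eqref{110} to rewrite $(1-\lambda)f^*+\lambda g^*=(f.(1-\lambda))^*+(g.\lambda)^*$. Since $(f,g)\in\mathcal{W}$, $\mathrm{dom}\,(f.(1-\lambda))^*=\mathrm{dom}\,f^*=E^*$ and $\mathrm{dom}\,(g.\lambda)^*=\mathrm{dom}\,g^*=E^*$, so the sufficient condition \eqref{125} for an inf-convolution to lie in $\Gamma_0(E)$ holds trivially for the pair $(f.(1-\lambda),g.\lambda)$. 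The equivalence \eqref{120} then yields
$$f!_\lambda g\;=\;\big((1-\lambda)f^*+\lambda g^*\big)^*\;=\;\big(f.(1-\lambda)\big)\,\Box\,\big(g.\lambda\big),$$
or, unfolding the inf-convolution,
$$(f!_\lambda g)(x)\;=\;\inf\!\Big\{(1-\lambda)f(u)+\lambda g(v)\,:\,u,v\in E,\ (1-\lambda)u+\lambda v=x\Big\}.$$

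Next, given $(f_i,g_i)\in\mathcal{W}$ for $i=1,2$ and $t\in(0,1)$, the convexity of $\mathcal{W}$ noted just above the statement implies that the convex combination $\big((1-t)f_1+tf_2,(1-t)g_1+tg_2\big)$ again lies in $\mathcal{W}$, so the infimum representation applies to it as well. For any admissible pair $(u,v)$ with $(1-\lambda)u+\lambda v=x$, one simply reorganizes
\begin{align*}
&(1-\lambda)\big[(1-t)f_1+tf_2\big](u)+\lambda\big[(1-t)g_1+tg_2\big](v)\\
&\qquad=(1-t)\big[(1-\lambda)f_1(u)+\lambda g_1(v)\big]+t\big[(1-\lambda)f_2(u)+\lambda g_2(v)\big]\\
&\qquad\geq(1-t)(f_1!_\lambda g_1)(x)+t(f_2!_\lambda g_2)(x),
\end{align*}
and then takes the infimum in $(u,v)$ on the left to obtain pointwise concavity of $!_\lambda$. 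The boundary cases $\lambda\in\{0,1\}$ are immediate because $f!_0 g=f$ and $f!_1 g=g$ are affine in $(f,g)$.

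For $\sharp_\lambda$ I would then use the integral representation $(f\sharp_\lambda g)(x)=\int_0^1(f!_t g)(x)\,d\nu_\lambda(t)$ with $\nu_\lambda$ the probability measure \eqref{425}. Multiplying the concavity inequality just established (with the parameter $t$ in place of $\lambda$) by the non-negative density $d\nu_\lambda(t)$ and integrating preserves the inequality, which immediately yields pointwise concavity of $(f,g)\mapsto f\sharp_\lambda g$.

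The main obstacle is that a direct attack from the definition $f!_\lambda g=((1-\lambda)f^*+\lambda g^*)^*$ using only the pointwise convexity of the Fenchel duality \eqref{pc} produces inequalities in the wrong direction: one application inside the outer conjugate and another after it sandwich the target from opposite sides and cannot be chained into a single concavity bound. The role of the hypothesis $(f,g)\in\mathcal{W}$ is precisely to activate \eqref{125} and replace the outer conjugate by an infimum; once this infimum formula is available, concavity becomes visible simply because its integrand is linear in $(f,g)$.
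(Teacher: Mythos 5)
Your proposal is correct and follows essentially the same route as the paper: both use the hypothesis $(f,g)\in{\mathcal W}$ to activate condition \eqref{125}, upgrade the definition of $f!_\lambda g$ to the genuine inf-convolution $f.(1-\lambda)\,\Box\, g.\lambda$ via \eqref{110}, \eqref{115} and \eqref{120}, read off concavity from the fact that an infimum of a sum dominates the sum of the infima, and then integrate against $d\nu_\lambda$ to transfer the result to $\sharp_\lambda$. The only cosmetic difference is that you parametrize the inf-convolution by pairs $(u,v)$ with $(1-\lambda)u+\lambda v=x$ while the paper keeps the single variable $z$; your closing remark about why a naive use of \eqref{pc} fails is a nice observation but not needed for the proof.
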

\begin{proof}
By the same reasons as in the proof of the previous proposition, we need to prove that $(f,g)\longmapsto f!_{\lambda}g$ is point-wisely concave with respect to the first argument $f$. By definition of $f!_\lambda g$, with the help of \eqref{110} and \eqref{115}, we can write
\begin{equation}\label{470}
f!_\lambda g=\big((1-\lambda)f^*+\lambda g^*\big)^*=\big(f.(1-\lambda)\Box g.\lambda\big)^{**}.
\end{equation}
Since $(f,g)\in{\mathcal W}$ then it is easy to verify that the condition \eqref{125} is here satisfied i.e. $int\big(dom(1-\lambda).f^*\big)\cap dom\big(\lambda.g^*\big)\neq\emptyset$. It follows that $f.(1-\lambda)\Box g.\lambda\in\Gamma_0(E)$ and so \eqref{470} becomes
\begin{equation}\label{472}
f!_{\lambda}g=f.(1-\lambda)\Box g.\lambda.
\end{equation}
Now, let $(f_1,g_1),(f_2,g_2)\in{\mathcal W}$ and $t\in(0,1)$. By \eqref{472} and the definition of the inf-convolution, we have for any $x\in E$
\begin{multline*}
\big((1-t)f_1+tf_2\big)!_{\lambda}\big((1-t)g_1+tg_2\big)(x)\\
=\inf_{z\in E}\Big\{\big((1-t)f_1+tf_2\big).(1-\lambda)(z)+
\big((1-t)g_1+tg_2\big).\lambda(x-z)\Big\}\\
=\inf_{z\in E}\left\{(1-t)\Big(f_1.(1-\lambda)(z)+g_1.\lambda(x-z)\Big)+t\Big(f_2.(1-\lambda)(z)+g_2.\lambda(x-z)\Big)\right\}\\
\geq(1-t)\inf_{z\in E}\Big(f_1.(1-\lambda)(z)+g_1.\lambda(x-z)\Big)+t\inf_{z\in E}\Big(f_2.(1-\lambda)(z)+g_2.\lambda(x-z)\Big)\\
=(1-t)\big(f_1.(1-\lambda)\Box g_1.\lambda\big)(x)+t\big(f_2.(1-\lambda)\Box g_2.\lambda\big)(x)=(1-t)f_1!_\lambda g_1(x)+tf_2!_\lambda g_2(x).
\end{multline*}
Hence the desired result.
\end{proof}

\begin{remark}
Let $(f,g)\in{\mathcal W}$. Then, for any $\lambda\in(0,1)$ we have
\begin{equation}\label{475}
dom\;f\cap dom\;g\subset dom\big(f\sharp_{\lambda}g\big)\subset(1-\lambda)dom\;f+\lambda\;dom\;g.
\end{equation}
Indeed, it is easy to see that $dom(f\nabla_\lambda g)=dom\;f\cap dom\;g$ for any $\lambda\in(0,1)$. Otherwise, by using \eqref{472} it is not hard to check that
$$dom(f!_{\lambda}g)=(1-\lambda)dom\;f+\lambda\;dom\;g.$$
This, when combined with \eqref{440}, yields \eqref{475}.
\end{remark}

Now remark that, for $f,g\in\Gamma_0(E)$ fixed, the map $t\longmapsto f\nabla_tg$ is point-wisely affine (so convex and concave). Otherwise, we have the following result.

\begin{proposition}\label{prchm}
Let $f,g\in\Gamma_0(E)$ be fixed. Then the map $t\longmapsto f!_tg$ is point-wisely convex on $[0,1]$.
\end{proposition}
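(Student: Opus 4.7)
My plan is to exploit the variational form $f!_\lambda g = \bigl((1-\lambda)f^* + \lambda g^*\bigr)^*$ and observe that, for each fixed $x \in E$, the quantity $f!_\lambda g(x)$ is a supremum over $x^* \in E^*$ of functions that are affine in $\lambda$. Since a pointwise supremum of affine (hence convex) functions is convex, the convexity of $\lambda \longmapsto f!_\lambda g(x)$ on $(0,1)$ will follow at once; the boundary values $\lambda = 0, 1$, which are set separately by the convention \eqref{430}, then have to be incorporated by hand.

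In more detail, for fixed $x \in E$ and each $x^* \in dom\;f^* \cap dom\;g^*$, I would introduce the affine map
\[
\psi_{x^*}(\lambda) := \Re e\langle x^*, x\rangle - (1-\lambda)f^*(x^*) - \lambda g^*(x^*), \qquad \lambda \in [0,1].
\]
For $\lambda \in (0,1)$, the positivity of both coefficients $(1-\lambda)$ and $\lambda$ forces $(1-\lambda)f^*(x^*) + \lambda g^*(x^*) = +\infty$ whenever $x^* \notin dom\;f^* \cap dom\;g^*$, so the supremum in the definition of the conjugate concentrates on the common domain:
\[
f!_\lambda g(x) = \sup_{x^* \in dom\;f^* \cap dom\;g^*} \psi_{x^*}(\lambda).
\]
Then for $\lambda_1, \lambda_2 \in (0,1)$, $s \in [0,1]$, and $\lambda := (1-s)\lambda_1 + s\lambda_2$, affinity yields $\psi_{x^*}(\lambda) = (1-s)\psi_{x^*}(\lambda_1) + s\psi_{x^*}(\lambda_2) \leq (1-s) f!_{\lambda_1} g(x) + s f!_{\lambda_2} g(x)$, and taking the sup over $x^*$ on the left gives the desired convexity inequality on $(0,1)$.

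To extend the inequality to the endpoints I would use that $f,g \in \Gamma_0(E)$ implies $f = f^{**}$ and $g = g^{**}$, which produce $f(x) \geq \psi_{x^*}(0)$ and $g(x) \geq \psi_{x^*}(1)$ for every admissible $x^*$; the same supremum argument then covers the cases in which $\lambda_1 = 0$ or $\lambda_2 = 1$. The only obstacle here is really a bookkeeping one: the convention $0\cdot(+\infty)=+\infty$ prevents a direct substitution of $\lambda = 0$ or $\lambda = 1$ into $\bigl((1-\lambda)f^* + \lambda g^*\bigr)^*$, so the endpoint definitions $f!_0 g = f$ and $f!_1 g = g$ must be reconciled by hand through $f^{**} = f$ and $g^{**} = g$. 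Once this boundary check is carried out, convexity on the full interval $[0,1]$ follows from the fact that a pointwise supremum of affine functions of $\lambda$ is convex.
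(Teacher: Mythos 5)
Your proof is correct and rests on the same mechanism as the paper's: the paper invokes the point-wise convexity of the conjugation map $\phi\longmapsto\phi^*$ from \eqref{pc} together with the affinity of $t\longmapsto(1-t)f^*+tg^*$, while you re-derive exactly this in the case at hand by writing $f!_\lambda g(x)$ as a supremum of functions affine in $\lambda$, which is how \eqref{pc} is proved in the first place. Your explicit reconciliation of the endpoint conventions $f!_0g=f$, $f!_1g=g$ via $f=f^{**}$ and $g=g^{**}$ is a point of care that the paper's one-line proof glosses over.
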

\begin{proof}
By definition we have $f!_tg=\big((1-t)f^*+tg^*\big)^*$. Since the map $\phi\longmapsto \phi^*$ is point-wisely convex and the map $t\longmapsto(1-t)f^*+tg^*$ is point-wisely affine, the desired result follows immediately.
\end{proof}

Now, we will construct a family of functional means which enjoys interesting properties. Let $f,g\in\Gamma_0(E)$ and $\lambda\in[0,1]$ be fixed. For $s\in[0,1]$ we set
\begin{equation}\label{480}
G_s\big(f,g;\lambda\big)=\int_0^1f!_{st+(1-s)\lambda}g\;d\nu_{\lambda}(t),
\end{equation}
where $d\nu_{\lambda}(t)$ is defined by \eqref{425}. Observe that $G_s(f,f;\lambda)=f$ for any $s,\lambda\in[0,1]$ and $f\in\Gamma_0(E)$. The family $G_s(f,g;\lambda)$, when $s$ describes the interval $[0,1]$, includes the functional means $f!_\lambda g$ and $f\sharp_{\lambda}g$ in the sense that $G_0(f,g;\lambda)=f!_\lambda g$ and $G_1(f,g;\lambda)=f\sharp_{\lambda}g$. The basic properties of the maps $s\longmapsto G_s(f,g;\lambda)$ are encapsulated in the following result.

\begin{theorem}\label{thG}
With the above, the following assertions are met:\\
(i) The map $s\longmapsto G_s(f,g;\lambda)$ is point-wisely convex on $[0,1]$.\\
(ii) For any $s\in[0,1]$, we have
\begin{equation}\label{485}
f!_\lambda g\leq G_s\big(f,g;\lambda\big)\leq (f!_\lambda g)\nabla_s(f\sharp_\lambda g)\leq f\sharp_{\lambda}g\;\big(\leq f\nabla_{\lambda}g\big),
\end{equation}
which refines the left inequality in \eqref{440}.\\
(iii) We have
\begin{equation}\label{487}
\inf_{s\in[0,1]}G_s\big(f,g;\lambda\big)=f!_\lambda g\;\; \mbox{and}\;\; \sup_{s\in[0,1]}G_s\big(f,g;\lambda\big)=f\sharp_{\lambda}g,
\end{equation}
where the infimum and supremum are taken for the point-wise order.\\
(iv) The map $s\longmapsto G_s(f,g;\lambda)$ is point-wisely monotone increasing.
\end{theorem}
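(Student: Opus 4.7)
The plan is to deduce (ii), (iii), and (iv) from (i) together with a single moment calculation for the probability measure $\nu_\lambda$. For (i), I would fix $x\in E$ and work pointwise in $s$. Proposition \ref{prchm} asserts that $r\longmapsto(f!_r g)(x)$ is convex on $[0,1]$, and for each fixed $t,\lambda\in[0,1]$ the map $s\longmapsto st+(1-s)\lambda$ is affine; their composition is therefore convex in $s$, and integrating against the positive measure $\nu_\lambda$ preserves the convexity of $s\longmapsto G_s(f,g;\lambda)(x)$.

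For (ii), the key auxiliary fact is that $\nu_\lambda$ has mean $\lambda$, a direct Beta-integral computation using the reflection identity $\Gamma(\lambda+1)\Gamma(1-\lambda)=\lambda\pi/\sin(\pi\lambda)$:
$$\int_0^1 t\,d\nu_\lambda(t)=\frac{\sin(\pi\lambda)}{\pi}\int_0^1 t^\lambda(1-t)^{-\lambda}\,dt=\lambda.$$
Consequently $\int_0^1(st+(1-s)\lambda)\,d\nu_\lambda(t)=\lambda$, and Jensen's inequality applied to the convex integrand $r\longmapsto(f!_r g)(x)$ against the probability measure $\nu_\lambda$ delivers the left inequality of \eqref{485}. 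Next I would observe that $G_0(f,g;\lambda)=f!_\lambda g$, since the integrand is then constant in $t$, and $G_1(f,g;\lambda)=f\sharp_\lambda g$ by the definition \eqref{420}; the convexity of $s\longmapsto G_s$ from (i) then yields $G_s\leq(1-s)G_0+sG_1=(f!_\lambda g)\nabla_s(f\sharp_\lambda g)$, which is the middle inequality of \eqref{485}. The right inequality is immediate from \eqref{440}, since $f!_\lambda g\leq f\sharp_\lambda g$ forces the convex combination $(f!_\lambda g)\nabla_s(f\sharp_\lambda g)\leq f\sharp_\lambda g$.

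Part (iii) is then automatic, since the two bounds in \eqref{485} are attained at $s=0$ and $s=1$ respectively. For (iv), I would fix $x\in E$ and set $\varphi(s):=G_s(f,g;\lambda)(x)$; then $\varphi$ is convex on $[0,1]$ by (i) and satisfies $\varphi(s)\geq\varphi(0)$ for every $s$ by the left inequality already obtained in (ii). Any convex function on $[0,1]$ whose minimum is attained at $0$ is non-decreasing: for $0<s_1\leq s_2\leq 1$, convexity gives $\varphi(s_1)\leq(1-s_1/s_2)\varphi(0)+(s_1/s_2)\varphi(s_2)\leq\varphi(s_2)$. The only non-routine ingredient in the whole argument is the moment identity $\int_0^1 t\,d\nu_\lambda(t)=\lambda$, which is precisely what enables Jensen's inequality to produce the sharp left bound of \eqref{485}; everything else reduces to convexity and the linearity of the integral.
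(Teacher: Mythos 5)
Your proposal is correct and follows essentially the same route as the paper: the same moment computation $\int_0^1 t\,d\nu_\lambda(t)=\lambda$ via the Beta/Gamma reflection identity feeding into the integral Jensen inequality for the left bound, the endpoint identifications $G_0=f!_\lambda g$, $G_1=f\sharp_\lambda g$ for (iii), and convexity-plus-minimum-at-$0$ for (iv). The only cosmetic difference is that you obtain the middle inequality of \eqref{485} from the convexity of $s\longmapsto G_s$ at the endpoints $s=0,1$, while the paper integrates the pointwise inequality $f!_{st+(1-s)\lambda}g\leq s\,f!_tg+(1-s)\,f!_{\lambda}g$ directly; these are equivalent.
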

\begin{proof}
(i) Since the map $t\longmapsto f!_tg$ is point-wisely convex on $[0,1]$ and the real-function $s\longmapsto st+(1-s)\lambda\in[0,1]$ is affine then we deduce the desired result.\\
(ii) By the point-wise convexity of $t\longmapsto f!_tg$ we get
$$f!_{st+(1-s)\lambda}g\leq sf!_tg+(1-s)f!_{\lambda}g.$$
If we multiply this latter inequality by $d\nu_\lambda(t)$ and we integrate over $t\in[0,1]$ we get the middle inequality of \eqref{485}. The right inequality of \eqref{485} is obvious by virtue of the inequality $f!_{\lambda}g\leq f\sharp_\lambda g$. Now, let us show the left inequality of \eqref{485}. For the sake of simplicity for the reader, we fix $f,g\in\Gamma_0(E)$ and we set $\Phi(s)=f!_sg$. By Proposition \ref{prchm}, $\Phi$ is point-wisely convex on $[0,1]$. With this, \eqref{480} takes the following form
$$G_s(f,g;\lambda)=\int_0^1\Phi(st+(1-s)\lambda)\;d\nu_\lambda(t).$$
We can apply the integral Jensen inequality to this latter equality \cite{DRR}, and we then obtain
\begin{equation}\label{490}
G_s(f,g;\lambda)\geq\Phi\Big(\int_0^1\big(st+(1-s)\lambda\big)\;d\nu_\lambda(t)\Big).
\end{equation}
We have
\begin{multline*}
\int_0^1\big(st+(1-s)\lambda\big)\;d\nu_\lambda(t)\\=s\int_0^1t\;d\nu_\lambda(t)+(1-s)\lambda\int_0^1\;d\nu_\lambda(t)
=s\int_0^1t\;d\nu_\lambda(t)+(1-s)\lambda.
\end{multline*}
In another part, let us denote by $\Gamma$ and $B$ the standard special gamma and beta functions, respectively. By \eqref{425} we get
\begin{multline*}
\int_0^1t\;d\nu_\lambda(t)=\frac{\sin(\pi\lambda)}{\pi}\int_0^1t^{\lambda}(1-t)^{-\lambda}dt
=\frac{\sin(\pi\lambda)}{\pi}B(1+\lambda,1-\lambda)\\
=\frac{\sin(\pi\lambda)}{\pi}\frac{\Gamma(1+\lambda)\Gamma(1-\lambda)}{\Gamma(2)}=\frac{\sin(\pi\lambda)}{\pi}\;\lambda\;\Gamma(\lambda)\Gamma(1-\lambda)=\lambda.
\end{multline*}
Substituting these in \eqref{490} we obtain
$$G_s(f,g;\lambda)\geq\Phi(s\lambda+(1-s)\lambda)=\Phi(\lambda)=:f!_{\lambda}g,$$
and the left inequality of \eqref{485} is obtained.\\
(iii) Since $G_0(f,g;\lambda)=f!_\lambda g$ and $G_1(f,g;\lambda)=f\sharp_{\lambda}g$, then \eqref{487} are immediate from \eqref{485}.\\
(iv) If $0\leq s_1<s_2\leq1$, the point-wise convexity of $s\longmapsto G_s(f,g;\lambda)$ implies that
$$\frac{G_{s_2}(f,g;\lambda)-G_{s_1}(f,g;\lambda)}{s_2-s_1}\geq\frac{G_{s_1}(f,g;\lambda)-G_{0}(f,g;\lambda)}{s_1}.$$
This, with $G_0(f,g;\lambda)=f!_\lambda g$ and \eqref{487}, yields the desired result. The proof is complete.
\end{proof}

\section{\bf Logarithmic mean of two convex functionals}

In this section we will introduce a logarithmic mean of two convex functionals.
Let $f,g\in\Gamma_0(E)$. As pointed out in \cite{RAB}, the map $t\longmapsto f\sharp_tg$ is point-wisely continuous on $(0,1)$. We can then put the following.

\begin{definition}\label{def51}
Let $f,g\in\Gamma_0(E)$. The expression
\begin{equation}\label{510}
L(f,g)=:\int_0^1f\sharp_tgdt
\end{equation}
is called the logarithmic mean of $f$ and $g$.
\end{definition}

The terminology used in the preceding definition will be justified later. The basic properties of $L(f,g)$ are embodied in the following result.

\begin{proposition}
Let $f,g\in\Gamma_0(E)$. Then the following assertions hold:\\
(i) $L(f,f)=f$ and $L(f,g)=L(g,f)$.\\
(ii) For any $c,d\in{\mathbb R}$, $L(f+c,g+d)=L(f,g)+c\nabla d$ where $c\nabla d=:\frac{c+d}{2}$ denotes the extension of the arithmetic mean for any two real numbers.\\
(iii) Let $\alpha,\beta>0$, then
$$L(\alpha.f,\beta.g)=(\alpha\sharp\beta).L(\alpha\sharp\beta^{-1}.f,\alpha^{-1}\sharp\beta.g),$$
$$L(f.\alpha,g.\beta)=L(f.\alpha\sharp\beta^{-1},g.\alpha^{-1}\sharp\beta).(\alpha\sharp\beta),$$
where $\alpha\sharp\beta=:\sqrt{\alpha\beta}$ is the scalar geometric mean of $\alpha$ and $\beta$. In particular, one has
$$L(\alpha.f,\alpha.g)=\alpha.L(f,g)\;\; \mbox{and}\;\; L(f.\alpha,g.\alpha)=L(f,g).\alpha$$
\end{proposition}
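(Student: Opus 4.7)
The plan is to verify each item by unwinding the definition $L(f,g)=\int_0^1 f\sharp_t g\, dt$, pushing the operation in question inside the integral, and then invoking the algebraic identities for $f\sharp_t g$ already established in Section 4.

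For (i), the equality $L(f,f)=f$ is immediate from $f\sharp_t f=f$ (noted after \eqref{435}), since the integrand is constantly $f$. For symmetry, the relation $f\sharp_t g=g\sharp_{1-t}f$ from \eqref{435}, combined with the change of variables $u=1-t$ in the Lebesgue integral over $[0,1]$, yields $L(f,g)=L(g,f)$ at once.

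For (ii), the key lemma is the scalar-shift formula for the harmonic mean,
\[
(f+c)!_t(g+d)=f!_t g+(1-t)c+td,
\]
which follows from the identity $(f+c)^*=f^*-c$ recorded in Section 2: the weighted sum of conjugates equals $(1-t)f^*+tg^*-K$ with $K=(1-t)c+td$, and the general rule $(h-K)^*=h^*+K$ applied to $h=(1-t)f^*+tg^*$ gives the claim. Integrating this against the probability measure $d\nu_\lambda(t)$ and using $\int_0^1 t\,d\nu_\lambda(t)=\lambda$ (computed in the proof of Theorem \ref{thG}) produces $(f+c)\sharp_\lambda(g+d)=f\sharp_\lambda g+(1-\lambda)c+\lambda d$. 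A final integration over $\lambda\in[0,1]$, using $\int_0^1(1-\lambda)\,d\lambda=\int_0^1\lambda\,d\lambda=\tfrac{1}{2}$, gives $L(f+c,g+d)=L(f,g)+\tfrac{c+d}{2}=L(f,g)+c\nabla d$.

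For (iii), the particular identities come immediately from \eqref{445}: since $(\alpha.f)\sharp_t(\alpha.g)=\alpha.(f\sharp_t g)$ and the operation $h\mapsto\alpha.h$, being pointwise scalar multiplication, commutes with integration in $t$, one gets $L(\alpha.f,\alpha.g)=\alpha.L(f,g)$; the parallel statement $L(f.\alpha,g.\alpha)=L(f,g).\alpha$ is identical, as $(h.\alpha)(x)=\alpha h(x/\alpha)$ also commutes with the $t$-integral. The general identities then reduce to a bookkeeping factorization: setting $\gamma=\alpha\sharp\beta=\sqrt{\alpha\beta}$, the relations $\alpha\sharp\beta^{-1}=\alpha/\gamma$ and $\alpha^{-1}\sharp\beta=\beta/\gamma$ give $\alpha.f=\gamma.\bigl((\alpha\sharp\beta^{-1}).f\bigr)$ and $\beta.g=\gamma.\bigl((\alpha^{-1}\sharp\beta).g\bigr)$, so applying the homogeneity just established yields the first displayed equation of (iii). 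The second is handled identically via the $(\cdot).\alpha$ scaling. The main obstacle, such as it is, is purely notational: one must carefully keep straight the two distinct scalings $\alpha.f$ and $f.\alpha$ and verify that the arithmetic $\gamma\cdot(\alpha/\gamma)=\alpha$ carries through cleanly; all other ingredients are either identities stated in Section 2 or trivial commutation of scalar operations with the integral.
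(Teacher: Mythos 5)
Your proof is correct and follows essentially the same route as the paper: part (i) is identical (the substitution $t\mapsto 1-t$ together with \eqref{435}), and for (ii)--(iii), where the paper only says the identities follow from \eqref{110} by "basic operations" and omits the details, you supply exactly the intended computations — the shift rule $(f+c)^*=f^*-c$ propagated through $!_t$, then $\sharp_\lambda$ via $\int_0^1 t\,d\nu_\lambda(t)=\lambda$, and finally the $\lambda$-integration, plus the homogeneity \eqref{445} combined with the factorizations $\alpha.f=\gamma.\big((\alpha/\gamma).f\big)$ and $f.\alpha=\big(f.(\alpha/\gamma)\big).\gamma$ with $\gamma=\alpha\sharp\beta$. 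Nothing further is needed.
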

\begin{proof}
(i) Since $f\sharp_tf=f$ for any $t\in[0,1]$ then \eqref{510} gives $L(f,f)=f$. Making the change of variables $t=1-u$ in \eqref{510} we deduce
$L(f,g)=\int_0^1f\sharp_{1-u}gdu$. This with \eqref{435} implies $L(f,g)=L(g,f)$.\\
(ii) and (iii) According to \eqref{110}, with some basic operations and manipulations, we immediately deduce the desired equalities. The details are straightforward and therefore omitted here.
\end{proof}

The following result contains more interesting properties of $L(f,g)$.

\begin{theorem}
Let $f,g\in\Gamma_0(E)$. Then \\
(i) If $(f,g)\in{\mathcal W}$ then the binary map $(f,g)\longmapsto L(f,g)$ is separately point-wise increasing and separately point-wise concave.\\
(ii) The functional arithmetic-logarithmic-harmonic inequality holds, i.e.
\begin{equation}\label{513}
f!g\leq L(f,g)\leq f\nabla g.
\end{equation}
Thus, $L(f,g)\in\Gamma_0(E)$ provided that $dom\;f\cap dom\;g\neq\emptyset$.
\end{theorem}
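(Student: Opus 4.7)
The plan is to extract every needed property of $L(f,g)$ from the integrand $t\mapsto f\sharp_t g$ by passing it through the definite integral $\int_0^1\!\!\cdot\,dt$, using pointwise manipulations (monotonicity, linearity, and a Jensen application against the uniform measure on $[0,1]$). All heavy lifting has already been done in Section 5; what remains is to transport the conclusions under the sign of integration.

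For (i), I fix $(f,g)\in{\mathcal W}$ and argue separately in each argument. Proposition \ref{prPM} furnishes, for every $t\in[0,1]$, the pointwise monotonicity of $(f,g)\longmapsto f\sharp_t g$; integrating the pointwise inequality $f_1\sharp_t g\leq f_2\sharp_t g$ (which holds whenever $f_1\leq f_2$) over $t$ immediately yields $L(f_1,g)\leq L(f_2,g)$, and symmetrically in the second slot. Theorem \ref{thPC} applied on ${\mathcal W}$ gives the corresponding pointwise concavity inequality for $(f,g)\longmapsto f\sharp_t g$; integrating this inequality in $t$ and invoking linearity of the integral transfers concavity to $L$. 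Here one has to know that the integrand remains a legitimate element of $\Gamma_0(E)$ for each $t$, which is ensured on ${\mathcal W}$ by Theorem \ref{thPC} and \eqref{472}.

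For the right half of the inequality \eqref{513}, I invoke the right half of \eqref{440}: $f\sharp_t g\leq f\nabla_t g=(1-t)f+tg$ for every $t\in[0,1]$. Integrating over $t$, the elementary computations $\int_0^1(1-t)\,dt=\int_0^1 t\,dt=\tfrac12$ yield $L(f,g)\leq\tfrac12 f+\tfrac12 g=f\nabla g$.

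The more delicate half is $f!g\leq L(f,g)$, which I would reach in two steps. First, integrating the lower half $f!_t g\leq f\sharp_t g$ of \eqref{440} over $t$ gives $\int_0^1 f!_t g\,dt\leq L(f,g)$. Second, I apply the integral Jensen inequality pointwise to the map $\Phi:t\longmapsto f!_t g$, which is pointwise convex on $[0,1]$ by Proposition \ref{prchm}. Since the uniform probability measure on $[0,1]$ has mean $\tfrac12$, Jensen at each $x\in E$ gives $f!_{1/2}g\leq\int_0^1 f!_t g\,dt$, and chaining these inequalities produces $f!g\leq L(f,g)$. Note that this step parallels, and is simpler than, the integral Jensen argument already used in the proof of Theorem \ref{thG}.

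Finally, for the claim that $L(f,g)\in\Gamma_0(E)$ under $\operatorname{dom}\,f\cap\operatorname{dom}\,g\neq\emptyset$, the sandwich \eqref{513} is decisive: $L(f,g)\leq f\nabla g<+\infty$ on $\operatorname{dom}\,f\cap\operatorname{dom}\,g$ rules out $L(f,g)\equiv+\infty$, while $L(f,g)\geq f!g>-\infty$ prevents the value $-\infty$ from ever being attained; so $L(f,g)$ is proper. Convexity is inherited from the convex integrands $f\sharp_t g$ by pointwise averaging, and lower semi-continuity follows from Fatou's lemma applied to nets converging to a given $x\in E$. The step I expect to require the most care is the Jensen application underlying the left inequality, since the integrand can take the value $+\infty$; however this is handled by the conventions on $\overline{\mathbb R}$ already adopted in Section 2, so no new analytic ingredient is actually required.
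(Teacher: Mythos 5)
Your proposal is correct and follows essentially the same route as the paper: part (i) by integrating the monotonicity and concavity of $(f,g)\mapsto f\sharp_t g$ from Proposition \ref{prPM} and Theorem \ref{thPC}, the right inequality of \eqref{513} by integrating $f\sharp_tg\leq f\nabla_tg$, and the left one by integrating $f!_tg\leq f\sharp_tg$ and then invoking the integral Jensen inequality. The only (immaterial) difference is that you apply Jensen to the scalar convex map $t\longmapsto (f!_tg)(x)$ of Proposition \ref{prchm}, whereas the paper applies it to the point-wise convex conjugation $\phi\longmapsto\phi^*$ acting on $\int_0^1 f^*\nabla_tg^*\,dt$; since Proposition \ref{prchm} is itself proved from that convexity, the two are equivalent.
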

\begin{proof}
(i) follows from Proposition \ref{prPM} and Theorem \ref{thPC}, with \eqref{510}, respectively.\\
(ii) By the right inequality in \eqref{440}, we have $f\sharp_tg\leq f\nabla_tg$ for all $t\in[0,1]$. Integrating this latter inequality over $t\in[0,1]$, and noting that $\int_0^1f\nabla_tgdt=f\nabla g$ we get the right inequality of \eqref{513}. Now, we prove the left inequality of \eqref{513}. First, recall that we have $f!_tg=\big(f^*\nabla_tg^*\big)^*$. In another part, by \eqref{510} with the left inequality of \eqref{440} we get
$$L(f,g)=\int_0^1f\sharp_tg\;dt\geq\int_0^1f!_tg\;dt=\int_0^1\big(f^*\nabla_tg^*\big)^*dt.$$
According to \eqref{pc}, the duality map $\phi\longmapsto \phi^*$ is point-wisely convex. Following \cite{DRR}, such map satisfies the integral Jensen inequality. Thus, we obtain
$$L(f,g)\geq\Big(\int_0^1f^*\nabla_tg^*dt\Big)^*=\big(f^*\nabla g^*\big)^*=:f!g.$$
From \eqref{510} we deduce that $L(f,g)$ is convex l.s.c, as integral of a family of convex l.s.c functionals. Further, from the right inequality in \eqref{513} we infer that $L(f,g)$ is proper whenever $f\nabla g$ is, i.e. $dom\;f\cap dom\;g\neq\emptyset$. The proof is completed.
\end{proof}

The following result is as well of interest and justifies the terminology used in Definition \ref{def51}.

\begin{proposition}
Let $A,B\in{\mathcal B}^{+*}(H)$. Then we have
\begin{equation}\label{517}
L({\mathcal Q}_A,{\mathcal Q}_B)={\mathcal Q}_{L(A,B)},
\end{equation}
where ${\mathcal Q}_A$ refers to the generated function of $A$ previously defined, and
\begin{equation}\label{519}
L(A,B)=:\int_0^1A\sharp_tBdt=A^{1/2}F\big(A^{-1/2}BA^{-1/2}\big)A^{1/2},
\end{equation}
with $F(x)=\frac{x-1}{\log x}$ for $x>0,\; x\neq1$, and $F(1)=1$. That is, $L(A,B)$ is the operator logarithmic mean of $A$ and $B$.
\end{proposition}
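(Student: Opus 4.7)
The plan is to chain together Definition \ref{def51} with the intertwining relation \eqref{450} and then compute the resulting operator-valued integral via the spectral theorem. First I unfold the definition of $L$ applied to $({\mathcal Q}_A,{\mathcal Q}_B)$:
$$L({\mathcal Q}_A,{\mathcal Q}_B)=\int_0^1 {\mathcal Q}_A\sharp_t {\mathcal Q}_B\,dt.$$
For $A,B\in{\mathcal B}^{+*}(H)$ the third identity in \eqref{450} gives ${\mathcal Q}_A\sharp_t {\mathcal Q}_B={\mathcal Q}_{A\sharp_t B}$ for every $t\in(0,1)$, and the boundary cases $t\in\{0,1\}$ are handled by \eqref{430}. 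So the integrand is the quadratic form generated by $A\sharp_t B$.

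Next I swap the $t$-integral with the bracket. For fixed $x\in H$ the map $t\longmapsto\langle(A\sharp_t B)x,x\rangle$ is continuous and bounded on $[0,1]$ (coming from the norm-continuity of $t\longmapsto A\sharp_t B$), so linearity of $\langle\cdot,x\rangle$ combined with Fubini for scalar integrals yields
$$\int_0^1 {\mathcal Q}_{A\sharp_t B}(x)\,dt=\frac{1}{2}\int_0^1\langle(A\sharp_t B)x,x\rangle\,dt=\frac{1}{2}\Big\langle\Big(\int_0^1 A\sharp_t B\,dt\Big)x,x\Big\rangle={\mathcal Q}_{\int_0^1 A\sharp_t B\,dt}(x).$$
This proves \eqref{517} together with the first equality in \eqref{519}, provided one sets $L(A,B):=\int_0^1 A\sharp_t B\,dt$.

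Finally, to obtain the closed form in \eqref{519}, I substitute $A\sharp_t B=A^{1/2}\big(A^{-1/2}BA^{-1/2}\big)^{t}A^{1/2}$ from \eqref{455} and pull the constant factors $A^{1/2}$ outside the Lebesgue integral:
$$L(A,B)=A^{1/2}\Big(\int_0^1\big(A^{-1/2}BA^{-1/2}\big)^{t}\,dt\Big)A^{1/2}.$$
Setting $X:=A^{-1/2}BA^{-1/2}\in{\mathcal B}^{+*}(H)$ and writing its spectral resolution $X=\int\lambda\,dE(\lambda)$, the continuous bounded kernel $(t,\lambda)\longmapsto\lambda^{t}$ on $[0,1]\times\mathrm{spec}(X)\subset[0,1]\times(0,\infty)$ legitimizes Fubini between Lebesgue integration in $t$ and the scalar spectral measure:
$$\int_0^1 X^{t}\,dt=\int\Big(\int_0^1\lambda^{t}\,dt\Big)dE(\lambda)=\int F(\lambda)\,dE(\lambda)=F(X),$$
where the elementary antiderivative $\int_0^1\lambda^{t}\,dt=(\lambda-1)/\log\lambda$ for $\lambda>0$, $\lambda\neq 1$ (and the obvious limit value $1$ at $\lambda=1$) is precisely the function $F$ of the statement. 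Combining the two displays gives the second equality in \eqref{519}.

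The only delicate point is this last Fubini step together with the spectral identification $\int_0^1 X^{t}\,dt=F(X)$; neither is deep, but it is where the logarithm enters and where the name ``logarithmic mean'' is justified. Everything else is bookkeeping on top of Proposition \ref{prEl}(ii), identity \eqref{450}, and Definition \ref{def51}.
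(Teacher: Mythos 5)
Your proposal is correct and follows essentially the same route as the paper: unfold Definition \ref{def51}, apply the third identity in \eqref{450} to identify the integrand as ${\mathcal Q}_{A\sharp_tB}$, commute the $t$-integral with the quadratic form, and evaluate $\int_0^1\big(A^{-1/2}BA^{-1/2}\big)^t\,dt=F\big(A^{-1/2}BA^{-1/2}\big)$ by functional calculus. You merely make explicit the interchange-of-integrals and spectral-theorem justifications that the paper leaves implicit.
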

\begin{proof}
By \eqref{510}, with the third relation in \eqref{450}, we can write
$$L\big({\mathcal Q}_A,{\mathcal Q}_B\big)=\int_0^1{\mathcal Q}_A\sharp_t{\mathcal Q}_Bdt=\int_0^1{\mathcal Q}_{A\sharp_tB}dt
={\mathcal Q}_{\int_0^1A\sharp_tBdt}=:{\mathcal Q}_{L(A,B)},$$
with
\begin{multline*}
L(A,B)=\int_0^1A\sharp_tBdt=A^{1/2}\int_0^1\Big(A^{-1/2}BA^{-1/2}\Big)^tdt\;A^{1/2}\\=:A^{1/2}F\big(A^{-1/2}BA^{-1/2}\big)A^{1/2},
\end{multline*}
where, for $x>0$,
$$F(x)=:\int_0^1x^tdt=\Big[\frac{x^t}{\log x}\Big]_0^1=\frac{x-1}{\log x}\;\; \mbox{if}\;\; x\neq1,\;\; F(1)=1.$$
The proof is complete.
\end{proof}

In order to state another result of interest, we need the following lemma.

\begin{lemma}
Let $x>0$ then we have
\begin{equation}\label{525}
\phi(x)=:\int_0^1x^v\sin(\pi v)dv=\frac{(x+1)\pi}{\pi^2+\big(\log x\big)^2}.
\end{equation}
\end{lemma}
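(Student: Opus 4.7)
The plan is to compute the integral directly by rewriting $x^v = e^{v\log x}$ and using the standard antiderivative of $e^{av}\sin(bv)$. I would first dispose of the trivial case $x=1$, where the integrand is just $\sin(\pi v)$ and $\phi(1)=2/\pi$, which matches the claimed formula $(1+1)\pi/\pi^2 = 2/\pi$; this also confirms that the right-hand side has no singularity at $x=1$ despite the presence of $\log x$.

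For $x>0$ with $x\neq 1$, set $a=\log x\neq 0$ and $b=\pi$, so that
\begin{equation*}
\phi(x)=\int_0^1 e^{av}\sin(bv)\,dv.
\end{equation*}
I would then use the antiderivative
\begin{equation*}
\int e^{av}\sin(bv)\,dv=\frac{e^{av}\bigl(a\sin(bv)-b\cos(bv)\bigr)}{a^2+b^2},
\end{equation*}
which can be derived either by integration by parts twice or, more cleanly, by taking the imaginary part of $\int e^{(a+ib)v}dv=e^{(a+ib)v}/(a+ib)$ and rationalising the denominator. Evaluating at $v=1$ using $\sin\pi=0$, $\cos\pi=-1$, $e^{a}=x$ gives $\pi x/(a^2+\pi^2)$; evaluating at $v=0$ using $\sin 0=0$, $\cos 0=1$, $e^{0}=1$ gives $-\pi/(a^2+\pi^2)$. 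Subtracting yields
\begin{equation*}
\phi(x)=\frac{\pi x-(-\pi)}{(\log x)^2+\pi^2}=\frac{(x+1)\pi}{\pi^2+(\log x)^2},
\end{equation*}
which is the claimed identity.

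There is essentially no obstacle here: the whole argument is a one-line antiderivative evaluation, and the only mild subtlety is to note that the case $x=1$ must be treated separately (since $a=\log x=0$ makes the intermediate calculation look degenerate, even though the final answer extends continuously). As an alternative presentation I could instead write $\sin(\pi v)=(e^{i\pi v}-e^{-i\pi v})/(2i)$, split the integral into two exponentials $\int_0^1 e^{(\log x\pm i\pi)v}dv$, use $e^{\log x\pm i\pi}=-x$, and combine the two resulting fractions over the common denominator $(\log x)^2+\pi^2$; this route avoids memorising the $\int e^{av}\sin(bv)\,dv$ formula and reaches the same conclusion in a few lines.
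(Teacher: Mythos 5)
Your proof is correct and is essentially the same computation as the paper's: the authors evaluate $\int_0^1 x^v e^{i\pi v}\,dv=\frac{-x-1}{i\pi+\log x}$ and take the imaginary part, which is precisely the complex-exponential route you describe for deriving the antiderivative of $e^{av}\sin(bv)$. (Your separate treatment of $x=1$ is harmless but not actually needed, since the denominator $a^2+\pi^2$ never vanishes.)
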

\begin{proof}
We consider $\psi(x)=:\int_0^1x^v\cos(\pi v)dv$ and we compute $\psi(x)+i\phi(x)$, where $i^2=-1$. Elementary computation of integral leads to
$$\psi(x)+i\phi(x)=\int_0^1x^ve^{i\pi v}dv=\int_0^1\exp v\big(i\pi+\log x\big)dv=\frac{-x-1}{i\pi+\log x}.$$
Separating real part and imaginary part, we get the desired result, so completing the proof.
\end{proof}

In Definition \ref{def51}, $L(f,g)$ is defined in terms of the weighted geometric functional mean. The following result gives an expression of $L(f,g)$ in terms of the weighted harmonic functional mean.

\begin{theorem}\label{thFM}
Let $f,g\in\Gamma_0(E)$. Then
\begin{equation}\label{530}
L(f,g)=\int_0^1f!_tg\;d\mu(t),
\end{equation}
where $\mu(t)$ is the probability measure on $(0,1)$ defined by
\begin{equation}\label{535}
d\mu(t)=:\frac{dt}{t(1-t)\Big(\pi^2+\big(\log\frac{t}{1-t}\big)^2\Big)}.
\end{equation}
\end{theorem}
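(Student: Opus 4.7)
The plan is to substitute the integral representation \eqref{420} of $f\sharp_\lambda g$ (equivalently, the representation of $f\sharp_\lambda g$ against the probability measure $d\nu_\lambda$ given in \eqref{425}) into the definition \eqref{510} of $L(f,g)$, swap the two integrations, and evaluate the inner integral over $\lambda$ with the help of the preceding Lemma applied at the point $x=t/(1-t)$. The computation is purely analytic once the exchange of integrals is justified, and no further functional-analytic input is needed beyond what has been established.

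More precisely, starting from $L(f,g)=\int_0^1 f\sharp_\lambda g\,d\lambda$ and inserting
\[
f\sharp_\lambda g=\int_0^1 f!_t g\,d\nu_\lambda(t)=\int_0^1 f!_t g\,\frac{\sin(\pi\lambda)}{\pi}\,\frac{t^{\lambda-1}}{(1-t)^\lambda}\,dt,
\]
I obtain pointwise in $x\in E$ a double integral of the nonnegative-below integrand $(t,\lambda)\mapsto (f!_t g)(x)\cdot \frac{\sin(\pi\lambda)}{\pi}\frac{t^{\lambda-1}}{(1-t)^\lambda}$. Applying Fubini--Tonelli to move the $\lambda$-integration inside yields
\[
L(f,g)(x)=\int_0^1 (f!_t g)(x)\left(\int_0^1 \frac{\sin(\pi\lambda)}{\pi}\frac{t^{\lambda-1}}{(1-t)^\lambda}\,d\lambda\right)dt.
\]

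Next I evaluate the inner integral. Pulling the factor $1/t$ out and setting $x=t/(1-t)$, the integral becomes $\frac{1}{\pi t}\int_0^1 x^\lambda\sin(\pi\lambda)\,d\lambda$, which by the Lemma equals $\frac{1}{\pi t}\cdot\frac{(x+1)\pi}{\pi^2+(\log x)^2}$. Since $x+1=1/(1-t)$ and $\log x=\log\frac{t}{1-t}$, the inner integral simplifies to
\[
\frac{1}{t(1-t)\bigl(\pi^2+(\log\frac{t}{1-t})^2\bigr)},
\]
which is exactly the density $d\mu(t)/dt$ from \eqref{535}. Thus \eqref{530} follows. A brief check that $\mu$ is a probability measure can be made by taking $f=g$, whence $f!_tg=f$ and the identity forces $\int_0^1 d\mu(t)=1$; alternatively one can verify the total mass directly via the substitution $u=t/(1-t)$.

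The main obstacle is the justification of the order-swapping. The integrand $(f!_t g)(x)$ is not \emph{a priori} nonnegative, and it may even take the value $+\infty$ on a set of positive measure, so Fubini--Tonelli cannot be invoked verbatim. I would handle this by splitting into the two standard cases at each fixed $x$: if $(f!_t g)(x)<+\infty$ for almost every $t\in(0,1)$, then after an additive normalization (since $f!_t g$ is bounded below by a fixed affine functional at $x$, coming from any subgradient of $f\nabla g$ at $x$, by \eqref{440}) the integrand becomes nonnegative and Tonelli applies; if $(f!_t g)(x)=+\infty$ on a set of positive Lebesgue measure, then both sides of \eqref{530} are $+\infty$ (the left side by \eqref{510} combined with the inequality $f!_tg\leq f\sharp_tg$ from \eqref{440}, and the right side trivially), and the identity holds in $\widetilde{\mathbb R}$. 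This case distinction, while slightly technical, is the only delicate point; all remaining steps reduce to the explicit integral computed in the Lemma.
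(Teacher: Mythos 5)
Your proof follows essentially the same route as the paper's: substitute the $d\nu_\lambda$ representation of $f\sharp_\lambda g$ into \eqref{510}, exchange the order of integration, and evaluate the inner integral via the preceding Lemma at $x=t/(1-t)$, then deduce that $\mu$ has total mass $1$ from $L(f,f)=f$. The only difference is that you explicitly justify the interchange of integrals (which the paper passes over with ``or, equivalently''), and your case analysis there is sound.
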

\begin{proof}
By \eqref{510} and the third relation in \eqref{420} we can write
$$L(f,g)=\int_0^1f\sharp_sg\;ds=\int_0^1\frac{\sin(\pi s)}{\pi}\int_0^1\frac{t^{s-1}}{(1-t)^s}f!_tg\;dt\;ds,$$
or, equivalently,
$$L(f,g)=\frac{1}{\pi}\int_0^1\frac{f!_tg}{t}\int_0^1\sin(\pi s)\Big(\frac{t}{1-t}\Big)^s\;ds\;dt.$$
This, with \eqref{525} and a simple reduction, yields
$$L(f,g)=\int_0^1\frac{f!_tg\;dt}{t(1-t)\Big(\pi^2+\big(\log\frac{t}{1-t}\big)^2\Big)}=:\int_0^1f!_tg\;d\mu(t).$$
The fact that $L(f,f)=f$ and $f!_tf=f$ for any $t\in(0,1)$ and $f\in\Gamma_0(E)$ implies that $\int_0^1d\mu(t)=1$ i.e. $\mu(t)$ is a probability measure on $(0,1)$. The proof is complete.
\end{proof}

The preceding theorem, when interpreted in terms of functional mean and operator mean, brings us some information of great interest. In fact, first mention that the operator version of \eqref{530} is given in the following.

\begin{corollary}
For any $A,B\in{\mathcal B}^{+*}(H)$ we have
\begin{equation}\label{540}
L(A,B)=\int_0^1A!_tB\;d\mu(t),
\end{equation}
where $L(A,B)$ is given by \eqref{519} and $\mu(t)$ is defined in Theorem \ref{thFM}.
\end{corollary}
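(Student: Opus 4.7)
The plan is to apply Theorem \ref{thFM} to the specific choice $f = \mathcal{Q}_A$ and $g = \mathcal{Q}_B$, and then translate the resulting functional identity back to operators using the correspondence established earlier in the paper. Since $A, B \in \mathcal{B}^{+*}(H)$, Proposition \ref{prEl}(iii) guarantees that $\mathcal{Q}_A, \mathcal{Q}_B \in \Gamma_0(H)$, so Theorem \ref{thFM} gives
$$L(\mathcal{Q}_A, \mathcal{Q}_B) = \int_0^1 \mathcal{Q}_A !_t \mathcal{Q}_B \, d\mu(t),$$
with $\mu$ the probability measure defined in \eqref{535}.

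Next I would rewrite both sides using the quadratic functional identities already derived. For the left-hand side, the proposition containing \eqref{517} states $L(\mathcal{Q}_A, \mathcal{Q}_B) = \mathcal{Q}_{L(A,B)}$. For the integrand on the right, the second relation in \eqref{450} gives $\mathcal{Q}_A !_t \mathcal{Q}_B = \mathcal{Q}_{A !_t B}$. Inserting these yields
$$\mathcal{Q}_{L(A,B)} = \int_0^1 \mathcal{Q}_{A !_t B} \, d\mu(t).$$
Since $\mathcal{Q}_{(\cdot)}$ is linear in its operator argument (Proposition \ref{prEl}(ii)) and commutes with integration against the finite measure $\mu$ by the same pointwise linearity used in the proof of \eqref{517}, the right-hand side equals $\mathcal{Q}_{\int_0^1 A !_t B \, d\mu(t)}$.

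Finally, by Proposition \ref{prEl}(i), the equality $\mathcal{Q}_{L(A,B)} = \mathcal{Q}_{\int_0^1 A !_t B \, d\mu(t)}$ between two quadratic functionals generated by self-adjoint operators forces equality of the operators themselves, yielding \eqref{540}. The only mildly delicate point is justifying that $\mathcal{Q}$ commutes with the integral over $t \in (0,1)$ against $d\mu$; but this is immediate from the pointwise definition $\mathcal{Q}_X(x) = (1/2)\langle Xx, x\rangle$, which for each fixed $x \in H$ reduces the identity to Fubini/linearity of the scalar-valued Bochner-type integral $t \mapsto \tfrac{1}{2}\langle (A !_t B) x, x\rangle$ against the probability measure $\mu$. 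Hence there is no real obstacle; the corollary is a direct operator transcription of Theorem \ref{thFM} via the dictionary $A \leftrightarrow \mathcal{Q}_A$.
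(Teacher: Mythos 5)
Your proposal is correct and follows essentially the same route as the paper: specialize Theorem \ref{thFM} to $f={\mathcal Q}_A$, $g={\mathcal Q}_B$, use \eqref{450} and \eqref{517} to convert both sides into quadratic functionals, commute ${\mathcal Q}$ with the integral, and identify the underlying operators. Your explicit justification of the interchange of ${\mathcal Q}$ with the integral and the appeal to Proposition \ref{prEl}(i) only make explicit steps the paper leaves implicit.
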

\begin{proof}
Taking $f={\mathcal Q}_A$ and $g={\mathcal Q}_B$ in \eqref{530}, with the help of \eqref{517}, we get
$$L\big({\mathcal Q}_A,{\mathcal Q}_B\big)=\int_0^1{\mathcal Q}_A!_t{\mathcal Q}_Bd\mu(t)=\int_0^1{\mathcal Q}_{A!_tB}d\mu(t)={\mathcal Q}_{\int_0^1A!_tBd\mu(t)}={\mathcal Q}_{L(A,B)}.$$
The desired result follows.
\end{proof}

Now, let us observe the following remark which explains some interesting topics.

\begin{remark}
The Kubo-Ando theory for monotone operator means, \cite{KA}, tells us that every operator mean $A\sigma B$ can be written in the following form
\begin{equation}\label{545}
A\sigma B=\int_0^1A!_tB\;dp(t),
\end{equation}
where $p(t)$ is a certain probability measure on $(0,1)$ depending on the operator mean $\sigma$. This, when combined with \eqref{540}, gives us the explicit probability $\mu(t)$ on $(0,1)$ corresponding to the logarithmic operator mean $L(A,B)$. Further, combining \eqref{530} and \eqref{545} we can infer that $L(f,g)$, previously defined, is a reasonable extension of $L(A,B)$ when the positive operator variables $A$ and $B$ are replaced by convex functional arguments $f$ and $g$, respectively.
\end{remark}

The following remark may be of interest as well.

\begin{remark}
Theorem \ref{thFM} may be good tool again for bringing us some information about computation of some integrals that are not simple to compute directly. Indeed, the fact that $d\mu(t)$ is a probability measure on $(0,1)$, with a simple decomposition, yields
\begin{equation}\label{547}
\int_0^1\frac{dt}{t\Big(\pi^2+\big(\log\frac{t}{1-t}\big)^2\Big)}=\int_0^1\frac{dt}{(1-t)\Big(\pi^2+\big(\log\frac{t}{1-t}\big)^2\Big)}=\frac{1}{2}.
\end{equation}
By simple change of variables in these latter integrals, we get
$$\int_0^{\pi/2}\frac{\tan z\;dz}{\pi^2+4\big(\log(\tan z)\big)^2}=\int_0^{\pi/2}\frac{\cot z\;dz}{\pi^2+4\big(\log(\cot z)\big)^2}
=\frac{1}{4},$$
and then
$$\int_0^\infty\frac{u\;du}{(1+u^2)\big(\pi^2+(\log u)^2\big)}=\frac{1}{4}.$$
\end{remark}

Another remark which gives more explanation about the interest of \eqref{530} is recited in what follows.

\begin{remark}
Let $\Psi:(0,1)\longrightarrow{\mathbb R}$ be defined by:
\begin{equation}\label{550}
\forall t\in(0,1)\;\;\;\;\;\;\; \Psi(t)=\frac{1}{t(1-t)\Big(\pi^2+\big(\log\frac{t}{1-t}\big)^2\Big)}.
\end{equation}
It is easy to see that $\Psi$ is a symmetric density function on $(0,1)$ i.e. $\Psi(t)\geq0$ and $\Psi(t)=\Psi(1-t)$ for any $t\in(0,1)$, and $\int_0^1\Psi(t)dt=1$. Since $t\longmapsto f!_tg$ is point-wisely convex, see Proposition \ref{prchm}, we can then apply the so-called F\'{e}jer-Hermite-Hadamard inequality, \cite{DRP}, to \eqref{530}. In fact, for any $x\in E$, we have
$$f!_{1/2}g(x)\leq\int_0^1\Psi(t)(f!_tg)(x)dt\leq\frac{f!_0g(x)+f!_1g(x)}{2}.$$
This, with $f!_0g=f,\; f!_1g=g,\;f!_{1/2}g=f!g$ and \eqref{530}, immediately yields again \eqref{513}.
\end{remark}

\section{\bf Further inequalities about $L(f,g)$}

In this section we will give more inequalities involving $L(f,g)$. We first state the following result which will be needed in the sequel, see \cite{RAI2}.

\begin{lemma}
Let $f,g\in\Gamma_0(E)$. For each $t,s\in(0,1)$, we have
\begin{equation}\label{610}
0\leq r_{t,s}\big(f\nabla_sg-f!_sg\big)\leq f\nabla_tg-f!_tg\leq R_{t,s}\big(f\nabla_sg-f!_sg\big),
\end{equation}
where we set
$$r_{t,s}=:\min\Big(\frac{t}{s},\frac{1-t}{1-s}\Big)\;\; \mbox{and}\;\; R_{t,s}=:\max\Big(\frac{t}{s},\frac{1-t}{1-s}\Big).$$
\end{lemma}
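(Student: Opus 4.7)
The plan is to reduce the statement to a standard chord inequality for concave real-valued functions. For each fixed $x\in E$, define the real-valued function
$$\varphi_x(t):=(f\nabla_t g)(x)-(f!_t g)(x),\quad t\in[0,1],$$
and establish the asserted inequalities pointwise at $x$; since $r_{t,s}$ and $R_{t,s}$ are scalars independent of $x$, this will deliver \eqref{610} as a pointwise comparison of functionals.

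First I would record three structural properties of $\varphi_x$. The map $t\mapsto(f\nabla_t g)(x)=(1-t)f(x)+tg(x)$ is affine in $t$, and by Proposition \ref{prchm} the map $t\mapsto(f!_t g)(x)$ is convex on $[0,1]$. Hence $\varphi_x$ is concave on $[0,1]$, as an affine function minus a convex one. Moreover, the boundary values \eqref{430} give $\varphi_x(0)=\varphi_x(1)=0$, while the left half of \eqref{440} gives $\varphi_x(t)\geq 0$ throughout $[0,1]$. This nonnegativity immediately delivers the left-most inequality of \eqref{610}, namely $0\leq r_{t,s}\bigl(f\nabla_s g-f!_s g\bigr)$.

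Next I would apply two one-line chord estimates to a concave function vanishing at the endpoints. Suppose first that $0<t\leq s<1$. Writing $t=(t/s)\,s+(1-t/s)\cdot 0$ and invoking concavity together with $\varphi_x(0)=0$ yields $\varphi_x(t)\geq(t/s)\varphi_x(s)$. Writing instead $s=\tfrac{1-s}{1-t}\,t+\tfrac{s-t}{1-t}\cdot 1$ and invoking concavity with $\varphi_x(1)=0$ yields $\varphi_x(s)\geq\tfrac{1-s}{1-t}\varphi_x(t)$, i.e.\ $\varphi_x(t)\leq\tfrac{1-t}{1-s}\varphi_x(s)$. The case $0<s\leq t<1$ is handled by an identical argument with the roles of $s$ and $t$ swapped, giving $\tfrac{1-t}{1-s}\varphi_x(s)\leq\varphi_x(t)\leq\tfrac{t}{s}\varphi_x(s)$. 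Combining the two cases by taking the minimum and maximum of the two ratios recovers exactly the definitions of $r_{t,s}$ and $R_{t,s}$, completing the proof of \eqref{610} at $x$.

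The only real subtlety I anticipate is bookkeeping at points $x$ where $f(x)$ or $g(x)$ equals $+\infty$, since then $\varphi_x$ may be infinite and the subtraction has to be interpreted via the paper's conventions $(+\infty)-(+\infty)=+\infty$ and $0\cdot(+\infty)=+\infty$. In those degenerate cases the chord estimates reduce to trivial comparisons of $+\infty$ with itself, and concavity of $t\mapsto(f!_t g)(x)$ in the extended sense still holds by Proposition \ref{prchm}, so the argument passes through unchanged. I expect this to be the only obstacle; the heart of the proof is the concavity observation combined with the vanishing of $\varphi_x$ at the two endpoints.
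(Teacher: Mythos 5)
Your proof is correct: the paper itself gives no argument for this lemma (it only cites \cite{RAI2}), and your reduction to the concavity of $t\mapsto(f\nabla_tg)(x)-(f!_tg)(x)$ (affine minus convex, via Proposition \ref{prchm}) together with the two chord estimates at the endpoints $\varphi_x(0)=\varphi_x(1)=0$ is exactly the standard route to inequalities of this $\min/\max\big(\tfrac{t}{s},\tfrac{1-t}{1-s}\big)$ type, and the case analysis $t\le s$ versus $s\le t$ reproduces $r_{t,s}$ and $R_{t,s}$ precisely. Your handling of the extended-value bookkeeping is also sound, since for $x\in dom\,f\cap dom\,g$ all quantities are simultaneously finite (or simultaneously $+\infty$ when $dom\,f^*\cap dom\,g^*=\emptyset$), and otherwise every term in \eqref{610} equals $+\infty$.
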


Now, we are in the position to state the following result which concerns a refinement and reverse for the right inequality in \eqref{513}.

\begin{theorem}\label{thRR}
Let $f,g\in\Gamma_0(E)$. Then for any $s\in(0,1)$ we have
\begin{equation}\label{612}
0\leq\Big(\frac{1}{2}-I_{s}\Big)\frac{f\nabla_sg-f!_sg}{s(1-s)}
\leq f\nabla g-L(f,g)\leq I_s\frac{f\nabla_sg-f!_sg}{s(1-s)},
\end{equation}
where we set
$$I_s=:s\int_0^s\omega(t)dt+(1-s)\int_0^{1-s}\omega(t)dt,\;\;\omega(t)=:\frac{1}{t\Big(\pi^2+\big(\log\frac{t}{1-t}\big)^2\Big)},\; t\in(0,1).$$
\end{theorem}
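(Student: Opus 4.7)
The plan is to combine the integral representation of $L(f,g)$ from Theorem \ref{thFM} with the pointwise two-sided estimates for $f\nabla_t g - f!_t g$ supplied by the lemma \eqref{610}. First I would record the symmetry $d\mu(1-t) = d\mu(t)$, which is immediate since $\big(\log\tfrac{t}{1-t}\big)^2$ and $t(1-t)$ are both invariant under $t \mapsto 1-t$. Because $\mu$ is a probability measure, symmetry gives $\int_0^1 t\, d\mu(t) = \int_0^1 (1-t)\, d\mu(t) = 1/2$, whence
$$\int_0^1 f\nabla_t g\, d\mu(t) = f\int_0^1 (1-t)\, d\mu(t) + g\int_0^1 t\, d\mu(t) = f\nabla g.$$
Subtracting \eqref{530} from this yields the key identity
$$f\nabla g - L(f,g) = \int_0^1 \bigl(f\nabla_t g - f!_t g\bigr)\, d\mu(t),$$
which is the starting point of the argument.

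Next I would apply the lemma \eqref{610}: for each fixed $s \in (0,1)$ and every $t \in (0,1)$,
$$r_{t,s}\bigl(f\nabla_s g - f!_s g\bigr) \leq f\nabla_t g - f!_t g \leq R_{t,s}\bigl(f\nabla_s g - f!_s g\bigr).$$
Since $f\nabla_s g - f!_s g$ is independent of $t$, pointwise integration against $d\mu(t)$ yields
$$\Bigl(\int_0^1 r_{t,s}\, d\mu(t)\Bigr)\bigl(f\nabla_s g - f!_s g\bigr) \leq f\nabla g - L(f,g) \leq \Bigl(\int_0^1 R_{t,s}\, d\mu(t)\Bigr)\bigl(f\nabla_s g - f!_s g\bigr).$$
The nonnegativity of the leftmost quantity in \eqref{612} will then follow from $r_{t,s} \geq 0$.

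The main technical step — and the one I expect to be the only obstacle — is the explicit evaluation of the two integrals $\int_0^1 R_{t,s}\, d\mu(t)$ and $\int_0^1 r_{t,s}\, d\mu(t)$, which must be identified with $I_s/[s(1-s)]$ and $(1/2 - I_s)/[s(1-s)]$ respectively. Splitting the integral at $t = s$, one has $R_{t,s} = (1-t)/(1-s)$ on $[0,s]$ and $R_{t,s} = t/s$ on $[s,1]$, with the roles interchanged for $r_{t,s}$. The factor $(1-t)$ or $t$ cancels the corresponding factor in the denominator of $d\mu(t)$, leaving integrals of $\omega(t) = 1/[t(\pi^2 + (\log\tfrac{t}{1-t})^2)]$. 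The substitution $u = 1-t$, combined with $(\log\tfrac{t}{1-t})^2 = (\log\tfrac{1-t}{t})^2$, converts $\int_s^1 dt/[t(\pi^2 + \cdots)]$ into $\int_0^{1-s} \omega(u)\, du$ and similarly $\int_0^s dt/[(1-t)(\pi^2 + \cdots)]$ into $\int_{1-s}^1 \omega(u)\, du$. Using the normalization $\int_0^1 \omega(t)\, dt = 1/2$ already recorded in \eqref{547}, a short bookkeeping then delivers
$$\int_0^1 R_{t,s}\, d\mu(t) = \frac{I_s}{s(1-s)}, \qquad \int_0^1 r_{t,s}\, d\mu(t) = \frac{1/2 - I_s}{s(1-s)},$$
and \eqref{612} follows at once.
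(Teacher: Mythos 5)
Your proposal is correct and follows essentially the same route as the paper: the identity $f\nabla g-L(f,g)=\int_0^1\big(f\nabla_tg-f!_tg\big)\,d\mu(t)$, integration of the two-sided bounds \eqref{610} against $d\mu$, and the evaluation $\int_0^1R_{t,s}\,d\mu(t)=I_s/[s(1-s)]$ together with $\int_0^1r_{t,s}\,d\mu(t)=(1/2-I_s)/[s(1-s)]$ (the paper obtains the latter more quickly from $r_{t,s}+R_{t,s}=\frac{t}{s}+\frac{1-t}{1-s}$, which integrates to $a_s+b_s=\frac{1}{2s(1-s)}$). The only blemish is a transcription slip in your substitution step: it is $\int_s^1\frac{dt}{(1-t)\big(\pi^2+(\log\frac{t}{1-t})^2\big)}$, not $\int_s^1\frac{dt}{t\big(\pi^2+\cdots\big)}$, that $u=1-t$ converts into $\int_0^{1-s}\omega(u)\,du$; your stated final values are nonetheless correct.
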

\begin{proof}
Using \eqref{547}, it is easy to check that $f\nabla g=\displaystyle\int_0^1f\nabla_tg\;d\mu(t)$, where $\mu(t)$ is defined in Theorem \ref{thFM}. It follows that
$$f\nabla g-L(f,g)=\int_0^1\big(f\nabla_tg-f!_tg\big)d\mu(t).$$
This, with \eqref{610}, yields
$$a_s\big(f\nabla_sg-f!_sg\big)\leq f\nabla g-L(f,g)\leq b_s\big(f\nabla_sg-f!_sg\big),$$
where we set
$$a_s=:\int_0^1r_{t,s}\;d\mu(t),\;\; b_s=:\int_0^1R_{t,s}\;d\mu(t).$$
Let us remark that $R_{t,s}=\frac{t}{s}$ if $t\geq s$ and $R_{t,s}=\frac{1-t}{1-s}$ if $t\leq s$. By the elementary techniques of integration, it is not hard to check that
$$b_s=\frac{1}{1-s}\int_0^s\omega(t)dt+\frac{1}{s}\int_0^{1-s}\omega(t)dt=\frac{1}{s(1-s)}I_s.$$
For computing $a_s$ we use the fact that
$$r_{t,s}+R_{t,s}=\frac{t}{s}+\frac{1-t}{1-s}=\frac{t(1-s)+s(1-t)}{s(1-s)}.$$
Multiplying by $d\mu(t)$ and integrating over $t\in(0,1)$, with the help of \eqref{547}, we get $a_s+b_s=\frac{1}{2s(1-s)}$. Otherwise, by \eqref{547} again we can write
$$I_s\leq s\int_0^1\omega(t)dt+(1-s)\int_0^1\omega(t)dt=\frac{1}{2}.$$
Summarizing, we get \eqref{612}, so completing the proof.
\end{proof}

Remark that \eqref{612} implies that $1/4\leq I_s\leq1/2$ for any $s\in(0,1)$. Taking $s=1/2$ in \eqref{612} we immediately obtain the following result which refines the right inequality of \eqref{513}.

\begin{corollary}\label{corRR}
Let $f,g\in\Gamma_0(E)$. Then one has
$$0\leq(2-I)\big(f\nabla g-f!g\big)\leq f\nabla g-L(f,g),$$
or, equivalently, as an upper bound of $L(f,g)$ in convex combination of $f\nabla g$ and $f!g$
$$L(f,g)\leq(I-1)f\nabla g+(2-I)f!g\leq f\nabla g,$$
where we set
$$I=:4\;I_{1/2}=4\int_0^{1/2}\omega(t)dt,\;\; 1\leq I\leq 2.$$
\end{corollary}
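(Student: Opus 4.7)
The plan is to specialize Theorem \ref{thRR} at $s=1/2$ and then perform an elementary algebraic rearrangement. At this symmetric value one has $s(1-s)=1/4$, hence $1/(s(1-s))=4$, together with $f\nabla_{1/2}g=f\nabla g$ and $f!_{1/2}g=f!g$. Moreover, the two integrals defining $I_s$ coincide, giving
$$I_{1/2}=\tfrac{1}{2}\int_0^{1/2}\omega(t)\,dt+\tfrac{1}{2}\int_0^{1/2}\omega(t)\,dt=\int_0^{1/2}\omega(t)\,dt=\tfrac{I}{4},$$
so $1/2-I_{1/2}=(2-I)/4$. Substituting these quantities into the left half of \eqref{612}, the factors of $4$ cancel and one obtains at once
$$0\leq(2-I)\bigl(f\nabla g-f!g\bigr)\leq f\nabla g-L(f,g),$$
which is the first chain announced in the corollary.

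Next I would deduce the equivalent convex-combination form by trivial rearrangement. From the right inequality above,
$$L(f,g)\leq f\nabla g-(2-I)(f\nabla g-f!g)=(I-1)f\nabla g+(2-I)f!g.$$
The outer bound $(I-1)f\nabla g+(2-I)f!g\leq f\nabla g$ reduces to $(2-I)(f!g-f\nabla g)\leq 0$, which follows at once from $I\leq 2$ and the inequality $f!g\leq f\nabla g$ supplied by \eqref{440}.

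The only remaining item is the claim $1\leq I\leq 2$, which is not really an obstacle: it is the bound $1/4\leq I_{1/2}\leq 1/2$ recorded just after Theorem \ref{thRR}, rescaled by $4$. For emphasis, both bounds on $I_s$ are forced by the consistency of \eqref{612}, since $f\nabla_sg-f!_sg\geq 0$ by \eqref{440} and $f\nabla g-L(f,g)\geq 0$ by \eqref{513}: the left end of \eqref{612} being nonnegative gives $I_s\leq 1/2$, while requiring the left end to be at most the right end gives $1/2-I_s\leq I_s$, i.e. $I_s\geq 1/4$. In short, the corollary is a direct specialization of Theorem \ref{thRR}, and there is no genuine technical obstacle; the only care needed is in presenting the arithmetic cleanly.
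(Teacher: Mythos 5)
Your proposal is correct and is exactly the paper's (implicit) argument: the corollary is stated as an immediate specialization of Theorem \ref{thRR} at $s=1/2$, using $s(1-s)=1/4$, $I_{1/2}=\int_0^{1/2}\omega(t)\,dt=I/4$, and the bound $1/4\leq I_s\leq 1/2$ noted right after that theorem. Your algebraic rearrangement to the convex-combination form and your justification of $1\leq I\leq 2$ match what the paper intends, so there is nothing to add.
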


Note that the operator versions of Theorem \ref{thRR} and Corollary \ref{corRR} are immediate. Otherwise, the exact value of the integral $I_{1/2}=\int_0^{1/2}\omega(t)dt\in[1/4,1/2]$ seems to be uncomputable.

Corollary \ref{corRR} gives an upper bound of $L(f,g)$. For giving a lower bound of $L(f.g)$ we need the following lemma.

\begin{lemma}\label{lemRAA}
Let $f,g\in\Gamma_0(E)$ and $t\in(0,1)$. Let $x\in E$ be such that $\partial f(x)\neq\emptyset$ and $\partial g(x)\neq\emptyset$. Then, for any $x^*\in\partial f(x)$ and $z^*\in\partial g(x)$, we have the following inequality
\begin{equation}\label{613}
\big(0\leq\big)f\nabla_tg-f\sharp_tg\leq t(1-t)\Big({\mathcal F}_g(x,x^*)\nabla{\mathcal F}_f(x,z^*)\Big),
\end{equation}
where, for any $y\in E$, $y^*\in E^*$ and $h:E\longrightarrow{\mathbb R}\cup\{-\infty,+\infty\}$, we set
\begin{equation}\label{6135}
{\mathcal F}_h(y,y^*)=:h(y)+h^*(y^*)-\Re e\langle y^*,y\rangle\geq0.
\end{equation}
\end{lemma}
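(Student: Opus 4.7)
The plan is to establish the inequality at the point $x$ (where the subdifferential data lives) and to reduce to a uniform bound at each $s\in(0,1)$ that can then be integrated against the probability measure $d\nu_t$ appearing in \eqref{420}. Concretely, I would first aim to prove the intermediate inequality
$$\bigl(f\nabla_s g\bigr)(x)-\bigl(f!_s g\bigr)(x)\leq s(1-s)\Bigl[\mathcal{F}_g(x,x^*)+\mathcal{F}_f(x,z^*)\Bigr]$$
for every $s\in(0,1)$, and then use $f\sharp_t g=\int_0^1 f!_s g\,d\nu_t(s)$ together with $\int_0^1 s(1-s)\,d\nu_t(s)=t(1-t)/2$ to conclude.

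The key step is the choice of a judicious test point in the variational formula
$$\bigl(f!_s g\bigr)(x)=\sup_{u^*\in E^*}\Bigl\{\Re e\langle u^*,x\rangle-(1-s)f^*(u^*)-s g^*(u^*)\Bigr\}.$$
I would plug in $u^*:=(1-s)x^*+s z^*$ and use convexity of $f^*$ and $g^*$, namely $f^*(u^*)\leq(1-s)f^*(x^*)+sf^*(z^*)$ and $g^*(u^*)\leq(1-s)g^*(x^*)+sg^*(z^*)$, to get a lower bound on $(f!_s g)(x)$. Next I would substitute the subgradient identities $f^*(x^*)=\Re e\langle x^*,x\rangle-f(x)$ and $g^*(z^*)=\Re e\langle z^*,x\rangle-g(x)$ (coming from $x^*\in\partial f(x)$ and $z^*\in\partial g(x)$) and collect terms. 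The coefficients of $\Re e\langle x^*,x\rangle$ and $\Re e\langle z^*,x\rangle$ each collapse to $s(1-s)$, the diagonal terms reassemble into $(1-s)^2 f(x)+s^2 g(x)+s(1-s)[f(x)+g(x)]=(f\nabla_s g)(x)$, and the remaining cross terms $-s(1-s)\bigl[f^*(z^*)+g^*(x^*)\bigr]$, when combined with the inner-product contributions, regroup precisely as $-s(1-s)\bigl[\mathcal{F}_f(x,z^*)+\mathcal{F}_g(x,x^*)\bigr]$ via the definition \eqref{6135}.

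Having the uniform-in-$s$ inequality in hand, I would integrate against $d\nu_t(s)$. From the proof of Theorem \ref{thG} we already know $\int_0^1 s\,d\nu_t(s)=t$, hence $\int_0^1(f\nabla_s g)(x)\,d\nu_t(s)=(f\nabla_t g)(x)$. A short Beta-function calculation in the spirit of that proof gives $\int_0^1 s^2\,d\nu_t(s)=t(t+1)/2$ and therefore $\int_0^1 s(1-s)\,d\nu_t(s)=t-t(t+1)/2=t(1-t)/2$, which produces exactly the factor appearing in \eqref{613}. The nontrivial part of the argument is discovering the test point $u^*=(1-s)x^*+sz^*$: lazier choices like $u^*=x^*$ or $u^*=z^*$ yield one-sided bounds with only $s\,\mathcal{F}_g$ or $(1-s)\mathcal{F}_f$ and miss the symmetric $s(1-s)$ factor. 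Once this test point is chosen, the rest is a careful but mechanical bookkeeping followed by a one-line integration.
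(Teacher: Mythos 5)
Your proof is correct, and the verification goes through exactly as you describe: plugging $u^*=(1-s)x^*+sz^*$ into the dual supremum for $(f!_sg)(x)$, using convexity of $f^*,g^*$ and the subgradient equalities $f(x)+f^*(x^*)=\Re e\langle x^*,x\rangle$, $g(x)+g^*(z^*)=\Re e\langle z^*,x\rangle$, yields $(f\nabla_sg)(x)-(f!_sg)(x)\leq s(1-s)\bigl[\mathcal{F}_g(x,x^*)+\mathcal{F}_f(x,z^*)\bigr]$, and the Beta-function moments $\int_0^1 s\,d\nu_t(s)=t$, $\int_0^1 s^2\,d\nu_t(s)=t(t+1)/2$ give the factor $t(1-t)/2=t(1-t)\nabla$-normalization in \eqref{613}. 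The paper itself supplies no argument here (it only cites \cite{RAA} and notes the Hilbert-space proof carries over verbatim), and your route via the representation $f\sharp_tg=\int_0^1 f!_sg\,d\nu_t(s)$ is the natural one and consistent with that reference; the only cosmetic point worth adding is that $\partial f(x)\neq\emptyset$ and $\partial g(x)\neq\emptyset$ force $f(x),g(x),f^*(x^*),g^*(z^*)$ to be finite, so the bookkeeping with extended reals is legitimate (and if $f^*(z^*)$ or $g^*(x^*)$ equals $+\infty$ the right-hand side is $+\infty$ and the inequality is trivial).
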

\begin{proof}
This result was proved in \cite{RAA} when $E$ is a Hilbert space. The same proof works when $E$ is an arbitrary locally convex topological space.
\end{proof}

For $f,g\in\Gamma_0(E)$, we also need to introduce the following notation
\begin{equation}\label{614}
f\diamond g(x)=:\sup_{x^*\in\partial f(x)}\Big\{\Re e\langle x^*,x\rangle-g^*(x^*)\Big\},
\end{equation}
with the usual convention $\sup_{\emptyset}(...)=-\infty$. The elementary properties of the law $(f,g)\longmapsto f\diamond g$ are summarized in the following result.

\begin{proposition}\label{prdiamond}
Let $f,g\in\Gamma_0(E)$. The following assertions hold:\\
(i) For any $x\in E$, we have
$$f\diamond g(x)=\big(g^*+\Psi_{\partial f(x)}\big)^*(x).$$
(ii) Let $A,B\in{\mathcal B}^{+*}(H)$. Then one has
$${\mathcal Q}_A\diamond{\mathcal Q}_B={\mathcal Q}_{A\diamond B},\;\; \mbox{with}\;\; A\diamond B=:2A-AB^{-1}A.$$
(iii) $f\diamond g$ is not always convex.\\
(iv) $f\diamond g\leq g$  and so, $B-A\diamond B$ is positive for any $A,B\in{\mathcal B}^{+*}(H)$.
\end{proposition}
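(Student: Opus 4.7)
The plan is to handle the four assertions one by one, each by a short direct computation; none of them should be hard, and the only real care is keeping conventions straight.

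For (i), I would rewrite the definition \eqref{614} using the indicator function of $\partial f(x)$. Because $\Psi_{\partial f(x)}(x^*)=0$ on $\partial f(x)$ and $+\infty$ elsewhere, the constrained supremum in \eqref{614} coincides with
$$\sup_{x^*\in E^*}\Big\{\Re e\langle x^*,x\rangle-g^*(x^*)-\Psi_{\partial f(x)}(x^*)\Big\},$$
which is by definition $(g^*+\Psi_{\partial f(x)})^*(x)$. The convention $\sup_\emptyset(\cdots)=-\infty$ handles the case $\partial f(x)=\emptyset$ (there the right-hand side also equals $-\infty$ since then $g^*+\Psi_{\partial f(x)}\equiv+\infty$).

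For (ii), Proposition \ref{prEl}(v) gives $\partial{\mathcal Q}_A(x)=\{Ax\}$, so the supremum in \eqref{614} is taken over a singleton. Combined with the formula ${\mathcal Q}_B^*={\mathcal Q}_{B^{-1}}$ from \eqref{inv}, this yields
$${\mathcal Q}_A\diamond{\mathcal Q}_B(x)=\langle Ax,x\rangle-\tfrac{1}{2}\langle B^{-1}Ax,Ax\rangle=\tfrac{1}{2}\langle(2A-AB^{-1}A)x,x\rangle,$$
where self-adjointness of $A$ is used to move $A$ across the inner product. The operator $2A-AB^{-1}A$ is clearly self-adjoint, and the equality is precisely ${\mathcal Q}_{A\diamond B}(x)$. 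For (iii), I would exploit (ii): by Proposition \ref{prEl}(iii), $f\diamond g={\mathcal Q}_A\diamond{\mathcal Q}_B={\mathcal Q}_{A\diamond B}$ is convex iff $A\diamond B\geq 0$. Taking $H={\mathbb R}$, $A=2$, $B=1/2$ gives $A\diamond B=2\cdot 2-4/(1/2)=-4<0$, producing $f,g\in\Gamma_0(H)$ with $f\diamond g\notin\Gamma_0(H)$.

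For (iv), the Fenchel inequality \eqref{105} applied to $g$ says
$$\Re e\langle x^*,x\rangle-g^*(x^*)\leq g(x)$$
for every $x^*\in E^*$, in particular for every $x^*\in\partial f(x)$. Taking the supremum over $\partial f(x)$ yields $f\diamond g(x)\leq g(x)$ (trivially so when $\partial f(x)=\emptyset$). The consequence for operators follows by specializing to $f={\mathcal Q}_A$, $g={\mathcal Q}_B$, using (ii) and then Proposition \ref{prEl}(i) to pass from the functional inequality ${\mathcal Q}_{A\diamond B}\leq{\mathcal Q}_B$ back to the operator inequality $A\diamond B\leq B$.

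The main ``obstacle'' is really only bookkeeping: making sure the empty-subdifferential convention is consistent in (i), checking self-adjointness of $A\diamond B$ in (ii), and exhibiting a clean counterexample in (iii). Otherwise each part is a one- or two-line consequence of the conjugate/subdifferential calculus already developed in Sections 2 and 3.
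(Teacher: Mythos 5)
Your proposal is correct and follows essentially the same route as the paper: (i) by rewriting the constrained supremum with an indicator function, (ii) via $\partial{\mathcal Q}_A(x)=\{Ax\}$ and ${\mathcal Q}_B^*={\mathcal Q}_{B^{-1}}$, (iii) by exhibiting $A,B$ with $2A-AB^{-1}A\not\geq0$, and (iv) reducing to Proposition \ref{prEl}(i). The only cosmetic difference is that the paper derives (iv) from (i) (monotonicity of conjugation plus $g^{**}=g$) whereas you invoke the Fenchel inequality directly, which is the same fact; your empty-subdifferential bookkeeping is consistent with the paper's convention.
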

\begin{proof}
(i) Follows from \eqref{614}, with the definition of the conjugate duality.\\
(ii) We use \eqref{inv} and Proposition \ref{prEl},(v) with some algebraic manipulations. The details are straightforward and therefore omitted here.\\
(iii) It follows from (ii), since $2A-AB^{-1}A$ is not always positive.\\
(iv) It is a consequence of (i) and (ii).
\end{proof}

Before stating another main result, we mention the following remark which is of interest.

\begin{remark}
Since our involved functionals can take the values $\pm\infty$ we then have to be careful with certain critical situations. If fact, the equality $f-f=0$ is not always true. Precisely, we have $f-f=\Psi_{dom\;f}$ by virtue of the convention $(+\infty)-(+\infty)=+\infty$. For the same reason, the equality $f-g=-(g-f)$ is not always true. Also, the inequality $f\leq g$ is equivalent to $g-f\geq0$ but it is not equivalent to $f-g\leq0$.
\end{remark}

We are now in the position to state the following result which reverses the right inequality in \eqref{513}.

\begin{theorem}\label{thD}
Let $f,g\in\Gamma_0(E)$. Let $x\in E$ be such that $\partial f(x)\neq\emptyset$ and $\partial g(x)\neq\emptyset$. Then, for any $x^*\in\partial f(x)$ and $z^*\in\partial g(x)$ we have
\begin{equation}\label{615}
0\leq f\nabla g(x)-L(f,g)(x)\leq\frac{1}{6}\Big({\mathcal F}_g(x,x^*)\nabla{\mathcal F}_f(x,z^*)\Big).
\end{equation}
Or, equivalently,
\begin{equation}\label{616}
0\leq f\nabla g-L(f,g)\leq\frac{1}{6}\Big(f\nabla g-(f\diamond g)\nabla(g\diamond f)\Big).
\end{equation}
\end{theorem}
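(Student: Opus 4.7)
The plan is to combine Lemma \ref{lemRAA} with the integral representation $L(f,g)=\int_0^1 f\sharp_t g\,dt$ coming from Definition \ref{def51}. The left inequality in both \eqref{615} and \eqref{616} is immediate from the already-established arithmetic--logarithmic bound \eqref{513}, so the whole burden falls on the upper estimates. First I would fix $x\in E$ satisfying the sub-differential hypotheses and pick arbitrary $x^*\in\partial f(x)$ and $z^*\in\partial g(x)$. Applying Lemma \ref{lemRAA} pointwise at $x$, for every $t\in(0,1)$ one has
\[
\big(f\nabla_t g\big)(x)-\big(f\sharp_t g\big)(x)\leq t(1-t)\Big({\mathcal F}_g(x,x^*)\nabla{\mathcal F}_f(x,z^*)\Big).
\]
The right-hand side is a constant in $t$ once $x,x^*,z^*$ are fixed, so integrating over $t\in[0,1]$ and using the elementary identities $\int_0^1 f\nabla_t g\,dt=f\nabla g$, $\int_0^1 f\sharp_t g\,dt=L(f,g)$ and $\int_0^1 t(1-t)\,dt=\tfrac{1}{6}$ would yield \eqref{615} at once.

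To pass from \eqref{615} to the equivalent form \eqref{616}, I would take the infimum of the upper bound in \eqref{615} over all admissible pairs $(x^*,z^*)\in\partial f(x)\times\partial g(x)$. Using definition \eqref{6135} of ${\mathcal F}_h$ and separating the variables,
\[
{\mathcal F}_g(x,x^*)\nabla{\mathcal F}_f(x,z^*)=\frac{f(x)+g(x)}{2}-\frac{1}{2}\Big[\Re e\langle x^*,x\rangle-g^*(x^*)\Big]-\frac{1}{2}\Big[\Re e\langle z^*,x\rangle-f^*(z^*)\Big].
\]
Minimising in $(x^*,z^*)$ amounts to maximising the two bracketed quantities, whose suprema are, by the very definition \eqref{614} of the operation $\diamond$, exactly $f\diamond g(x)$ and $g\diamond f(x)$ respectively. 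Substituting gives the infimum $f\nabla g(x)-(f\diamond g)\nabla(g\diamond f)(x)$; inserting this into \eqref{615} produces \eqref{616}. The converse direction is automatic, since the trivial inequalities $\Re e\langle x^*,x\rangle-g^*(x^*)\leq f\diamond g(x)$ and $\Re e\langle z^*,x\rangle-f^*(z^*)\leq g\diamond f(x)$ show that the bound in \eqref{616} dominates that of \eqref{615} for each admissible pair.

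The step I expect to need the most care is not the integration but the bookkeeping in $\overline{\mathbb R}$ emphasised in the remark just preceding the statement: the separation into the two bracketed quantities above, and the subsequent substitution of suprema, are formal manipulations that must be justified in the extended real line. Fortunately, the hypothesis $\partial f(x)\neq\emptyset$ together with the Fenchel identity $f(x)+f^*(x^*)=\Re e\langle x^*,x\rangle$ recalled in Section 2 forces $f(x)$ and $f^*(x^*)$ to be finite for any $x^*\in\partial f(x)$, and similarly for $g$ and $z^*$, so ${\mathcal F}_g(x,x^*)$ and ${\mathcal F}_f(x,z^*)$ are genuine nonnegative real numbers and all algebraic operations above are legitimate. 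Once this is settled, the whole argument reduces to the two clean ingredients: Lemma \ref{lemRAA} integrated against the uniform measure on $[0,1]$, and the dualisation of the resulting bound through the $\diamond$-operation.
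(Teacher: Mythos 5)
Your proof is correct and follows essentially the same route as the paper: integrate Lemma \ref{lemRAA} over $t\in[0,1]$ using $\int_0^1 t(1-t)\,dt=\tfrac16$ to obtain \eqref{615}, then pass to \eqref{616} by taking the infimum of the bound over $(x^*,z^*)\in\partial f(x)\times\partial g(x)$ and identifying the resulting suprema with $f\diamond g(x)$ and $g\diamond f(x)$ via \eqref{614}. Only a wording slip at the end: the cited inequalities show that the bound in \eqref{615} dominates the one in \eqref{616} (not the reverse), which is precisely why \eqref{616} implies \eqref{615}.
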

\begin{proof}
Integrating \eqref{613} side by side over $t\in[0,1]$, with \eqref{510} and the fact that $\int_0^1f\nabla_tg\;dt=f\nabla g$, we get \eqref{615}. We now prove \eqref{616}. According to the previous remark, we begin by discussing some typical situations. Let $x\in E$. If $\partial f(x)=\emptyset$ or $\partial g(x)=\emptyset$ then by \eqref{613} we infer that $f\diamond g(x)=-\infty$ or $g\diamond f(x)=-\infty$, respectively. In this case we have $(f\diamond g)\nabla(g\diamond f)(x)\in\{-\infty,+\infty\}$. If $(f\diamond g)\nabla(g\diamond f)(x)=+\infty$, then by the first part of Proposition \ref{prdiamond},(iv) we deduce that $f(x)=+\infty$ or $g(x)=+\infty$ and so all sides of \eqref{615} take the value $+\infty$ at $x$. If $(f\diamond g)\nabla(g\diamond f)(x)=-\infty$ thus the two right sides of \eqref{615} are both equal to $+\infty$, by virtue of the convention $(c)-(-\infty)=+\infty$ for any $c\in{\mathbb R}\cup\{-\infty,+\infty\}$. It follows that \eqref{615} is satisfied at the point $x$, since $c\leq+\infty$ for any $c\in{\mathbb R}\cup\{-\infty,+\infty\}$. Now, assume that $\partial f(x)\neq\emptyset$ and $\partial g(x)\neq\emptyset$. Then \eqref{615} is satisfied at $x$ and it is then equivalent to the following inequality
\begin{equation}\label{618}
0\leq f\nabla g(x)-L(f,g)(x)\leq\frac{1}{6}\left(\inf_{x^*\in\partial f(x)}{\mathcal F}_g(x,x^*)\nabla\inf_{z^*\in\partial g(x)}{\mathcal F}_f(x,z^*)\right).
\end{equation}
By \eqref{6135} and \eqref{614} we have
$$\inf_{x^*\in\partial f(x)}{\mathcal F}_g(x,x^*)=g(x)-f\diamond g(x),\;\; \inf_{z^*\in\partial g(x)}{\mathcal F}_f(x,z^*)=f(x)-g\diamond f(x).$$
Substituting this in \eqref{618} we get the right inequality of \eqref{616} at $x$. The proof is complete.
\end{proof}

By using Proposition \ref{prdiamond},(ii) we left to the reader the routine task to check that the operator version of Theorem \ref{thD} may be recited as follows.

\begin{corollary}
For any $A,B\in{\mathcal B}^{+*}(H)$ there holds
\begin{equation}\label{620}
0\leq A\nabla B-L(A,B)\leq\frac{1}{6}\Big(\big(AB^{-1}A\big)\nabla\big(BA^{-1}B\big)-A\nabla B\Big).
\end{equation}
\end{corollary}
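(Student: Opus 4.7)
The plan is to derive \eqref{620} directly from Theorem \ref{thD} by specializing to $f={\mathcal Q}_A$ and $g={\mathcal Q}_B$, and then translating every piece through the quadratic-form functor ${\mathcal Q}$. Since $A,B\in{\mathcal B}^{+*}(H)$, Proposition \ref{prEl},(iii) gives ${\mathcal Q}_A,{\mathcal Q}_B\in\Gamma_0(H)$, while Proposition \ref{prEl},(v) gives $\partial{\mathcal Q}_A(x)=\{Ax\}\neq\emptyset$ and $\partial{\mathcal Q}_B(x)=\{Bx\}\neq\emptyset$ at every $x\in H$, so the hypotheses of Theorem \ref{thD} are satisfied pointwise.

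Applying \eqref{616} at each $x$ then yields the functional inequality $0\leq {\mathcal Q}_A\nabla{\mathcal Q}_B - L({\mathcal Q}_A,{\mathcal Q}_B) \leq \tfrac{1}{6}\bigl({\mathcal Q}_A\nabla{\mathcal Q}_B - ({\mathcal Q}_A\diamond{\mathcal Q}_B)\nabla({\mathcal Q}_B\diamond{\mathcal Q}_A)\bigr)$. I would then convert term by term into quadratic forms: by \eqref{450}, ${\mathcal Q}_A\nabla{\mathcal Q}_B={\mathcal Q}_{A\nabla B}$; by \eqref{517}, $L({\mathcal Q}_A,{\mathcal Q}_B)={\mathcal Q}_{L(A,B)}$; and by Proposition \ref{prdiamond},(ii), ${\mathcal Q}_A\diamond{\mathcal Q}_B={\mathcal Q}_{2A-AB^{-1}A}$ and ${\mathcal Q}_B\diamond{\mathcal Q}_A={\mathcal Q}_{2B-BA^{-1}B}$. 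Combining these with the additivity and positive homogeneity of $X\mapsto{\mathcal Q}_X$ (Proposition \ref{prEl},(ii)) rewrites every term of the functional inequality as a single ${\mathcal Q}_X$ for a self-adjoint operator $X$.

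A short algebraic simplification then gives $A\nabla B -(A\diamond B)\nabla(B\diamond A) = \tfrac{A+B}{2} - \tfrac{(2A-AB^{-1}A)+(2B-BA^{-1}B)}{2} = (AB^{-1}A)\nabla(BA^{-1}B) - A\nabla B$. Hence the functional inequality reads $0\leq{\mathcal Q}_{A\nabla B-L(A,B)} \leq \tfrac{1}{6}\,{\mathcal Q}_{(AB^{-1}A)\nabla(BA^{-1}B)-A\nabla B}$, and invoking Proposition \ref{prEl},(i) (equivalently, Theorem \ref{thF}, which translates pointwise inequalities between quadratic forms of self-adjoint operators into operator inequalities) yields the stated \eqref{620}.

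The main obstacle is purely bookkeeping: confirming that each occurrence of $\diamond$ actually lands in the ${\mathcal Q}_X$-class (which is exactly what Proposition \ref{prdiamond},(ii) delivers), using Proposition \ref{prEl},(ii) to merge the various quadratic forms into a single one on each side, and carrying out the one-line simplification above. Notably, no positivity of $(A\diamond B)\nabla(B\diamond A)$ or of $(AB^{-1}A)\nabla(BA^{-1}B)-A\nabla B$ is required, since the translation principle of Proposition \ref{prEl},(i) needs only self-adjointness; positivity of the right-hand side is then a \emph{consequence} of the inequality, not a hypothesis.
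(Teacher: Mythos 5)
Your proposal is correct and follows exactly the route the paper intends: the paper leaves this corollary to the reader, explicitly pointing to Proposition \ref{prdiamond},(ii) as the tool, and your argument—specializing \eqref{616} to ${\mathcal Q}_A,{\mathcal Q}_B$, translating each term via \eqref{450}, \eqref{517} and Proposition \ref{prdiamond},(ii), doing the one-line algebra, and invoking Proposition \ref{prEl},(i)—is precisely that verification, carried out correctly (including the correct observation that only self-adjointness is needed for the translation step).
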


\begin{remark}
It is easy to check that the scalar version of \eqref{620} reads as follows: for any real numbers $a,b>0$ we have
$$0\leq a\nabla b-L(a,b)\leq\frac{2}{3}\big(a\nabla b\big)\Big((a\nabla b)^2-(a\sharp b)^2\Big),$$
where $L(a,b)$ is the standard logarithmic mean of $a$ and $b$ i.e. $L(a,b)=:\frac{a-b}{\log\;a-\log\;b}$, for $a\neq b$, and $L(a,a)=a$.
\end{remark}

Now, we will be interested by refining the left inequality of \eqref{513}. Let $f,g\in\Gamma_0(E)$ be fixed. For $s\in[0,1]$, we set
\begin{equation}\label{622}
U_s(f,g)=\int_0^1\Big(f!_{st+(1-s)\frac{1}{2}}g\Big)d\mu(t),
\end{equation}
where $d\mu(t)$ is defined by \eqref{535}. Remark that $U_s(f,f)=f$ for any $f\in\Gamma_0(E)$ and $s\in[0,1]$. The map $s\longmapsto U_s(f,g)$ enjoys nice properties which we embody in the following result.

\begin{theorem}\label{thU}
The following assertions hold true:\\
(i) The map $s\longmapsto U_s(f,g)$ is point-wisely convex on $[0,1]$.\\
(ii) For any $s\in[0,1]$, we have the inequalities
\begin{equation}\label{625}
f!g\leq U_s(f,g)\leq (f!g)\nabla_sL(f,g)\leq L(f,g),
\end{equation}
which refines the left inequality in \eqref{513}.\\
(iii) We have
\begin{equation}\label{627}
\inf_{s\in[0,1]}U_s(f,g)=f!g\;\; \mbox{and}\;\; \sup_{s\in[0,1]}U_s(f,g)=L(f,g),
\end{equation}
where the infimum and supremum are taken for the point-wise order.\\
(iv) The map $s\longmapsto U_s(f,g)$ is point-wisely monotone increasing.
\end{theorem}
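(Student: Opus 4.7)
The plan is to mirror the four-part strategy already used in the proof of Theorem \ref{thG}, since the only structural differences are that (a) the weighting measure is $d\mu(t)$ rather than $d\nu_\lambda(t)$, and (b) the pivot point is $1/2$ rather than $\lambda$. All the heavy lifting — point-wise convexity of $t\mapsto f!_t g$, the representation $L(f,g)=\int_0^1 f!_t g\,d\mu(t)$, and Jensen's inequality for the duality map — is already available from Proposition \ref{prchm} and Theorem \ref{thFM}.

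For (i), I would note that the real affine map $s\mapsto st+(1-s)/2$ composed with the point-wise convex map $s\mapsto f!_s g$ (Proposition \ref{prchm}) produces, for each fixed $t$, a point-wise convex function of $s$; integrating against the positive measure $d\mu(t)$ then preserves point-wise convexity, giving (i).

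For (ii), the two easy inequalities come first. Applying the point-wise convexity of $t\mapsto f!_t g$ with the convex combination $st+(1-s)(1/2)$ gives
\begin{equation*}
f!_{st+(1-s)/2}g \;\leq\; s\,(f!_t g)+(1-s)\,(f!g),
\end{equation*}
and integrating against $d\mu(t)$, using Theorem \ref{thFM} and $\int_0^1 d\mu(t)=1$, yields the middle inequality $U_s(f,g)\le (f!g)\nabla_s L(f,g)$; the rightmost inequality $(f!g)\nabla_s L(f,g)\le L(f,g)$ is immediate from the left half of \eqref{513}. The left inequality $f!g\le U_s(f,g)$ is the main step: setting $\Phi(s)=f!_s g$, the integral Jensen inequality (valid for the point-wise convex $\Phi$, cf.\ the argument used in Theorem \ref{thG}) gives
\begin{equation*}
U_s(f,g)\;\geq\;\Phi\!\left(\int_0^1\bigl(st+(1-s)/2\bigr)\,d\mu(t)\right).
\end{equation*}
The crucial computation is therefore $\int_0^1 t\,d\mu(t)=1/2$, and this is exactly where I expect the only real subtlety to lie. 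Fortunately it follows for free from the symmetry $\Psi(t)=\Psi(1-t)$ of the density recorded in the remark about \eqref{550}: the change of variables $t\mapsto 1-t$ gives $\int_0^1 t\,d\mu(t)=\int_0^1 (1-t)\,d\mu(t)$, and combining with $\int_0^1 d\mu(t)=1$ yields $1/2$. Consequently the argument of $\Phi$ equals $s(1/2)+(1-s)(1/2)=1/2$, and $\Phi(1/2)=f!g$, delivering the left inequality.

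For (iii), direct evaluation gives $U_0(f,g)=\int_0^1 f!_{1/2}g\,d\mu(t)=f!g$ (since $d\mu$ is a probability measure) and $U_1(f,g)=\int_0^1 f!_t g\,d\mu(t)=L(f,g)$ by \eqref{530}; combined with the bounds in (ii) this yields \eqref{627}. Finally for (iv), by (i) the map $s\mapsto U_s(f,g)(x)$ is convex on $[0,1]$ for each fixed $x$, hence for $0\le s_1<s_2\le 1$ the slope inequality
\begin{equation*}
\frac{U_{s_2}(f,g)(x)-U_{s_1}(f,g)(x)}{s_2-s_1}\;\geq\;\frac{U_{s_1}(f,g)(x)-U_0(f,g)(x)}{s_1}
\end{equation*}
holds, and since the numerator on the right is non-negative by $U_0(f,g)=f!g\le U_{s_1}(f,g)$ (part (ii) applied at $s_1$), we conclude $U_{s_1}(f,g)\le U_{s_2}(f,g)$ point-wise. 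The whole proof is essentially a verbatim transcription of the Theorem \ref{thG} proof with $\lambda$ replaced by $1/2$ and $d\nu_\lambda$ replaced by $d\mu$; no new obstacles arise beyond verifying $\int_0^1 t\,d\mu(t)=1/2$, which is handled by symmetry.
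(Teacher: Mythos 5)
Your proposal is correct and follows essentially the same four-part argument as the paper: affine-composition plus integration for convexity, the convexity estimate and Jensen's integral inequality for \eqref{625}, evaluation at $s=0,1$ for \eqref{627}, and the slope inequality for monotonicity. The only (harmless) deviation is that you obtain $\int_0^1 t\,d\mu(t)=1/2$ from the symmetry $\Psi(t)=\Psi(1-t)$ of the density, whereas the paper reads it off directly from \eqref{547}; both are valid one-line computations.
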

\begin{proof}
(i) By Proposition \ref{prchm}, with the fact that $s\longmapsto st+(1-s)/2$ is affine for fixed $t\in[0,1]$, we deduce that, for any $t\in[0,1]$, the family of maps $s\longmapsto f!_{st+(1-s)/2}g$ is point-wisely convex on $[0,1]$. Thus $s\longmapsto U_s(f,g)$ is also point-wisely convex on $[0,1]$.\\
(ii) Since $t\longmapsto f!_tg$ is point-wisely convex then we can write
$$f!_{st+(1-s)/2}g\leq sf!_tg+(1-s)f!_{1/2}g.$$
Multiplying this latter inequality by $d\mu(t)$ and integrating over $t\in[0,1]$ we obtain the middle inequality in \eqref{625}, since $f!_{1/2}g=f!g$. The right inequality in \eqref{625} is immediate, since $f!g\leq L(f,g)$. Now, we will prove the left inequality in \eqref{625}. As for the proof of Theorem \ref{thG}, we fix $f,g\in\Gamma_0(E)$ and we simply set $\Phi(s)=f!_sg$. By Proposition \ref{prchm}, $\Phi$ is point-wisely convex on $[0,1]$. In another part, \eqref{622} can be written as follows
$$U_s(f,g)=\int_0^1\Phi(st+(1-s)/2)\;d\mu(t).$$
Writing this equality point-wisely we can then use the integral Jensen inequality, see also \cite{DRR}, and we get
\begin{equation}\label{630}
U_s(f,g)\geq\Phi\left(\int_0^1\left(st+(1-s)/2\right)\;d\mu(t)\right).
\end{equation}
We have, by utilizing \eqref{535} and \eqref{547},
\begin{multline*}
\int_0^1\left(st+(1-s)/2\right)\;d\mu(t)\\=s\int_0^1t\;d\mu(t)+(1-s)/2\int_0^1\;d\mu(t)
=s/2+(1-s)/2=1/2.
\end{multline*}
Substituting this in \eqref{630} we obtain
$$U_s(f,g)\geq\Phi(1/2)=:f!_{1/2}g=:f!g,$$
whence the left inequality in \eqref{625}.\\
(iii) By \eqref{622}, it is clear that $U_0(f,g)=f!g$ and $U_1(f,g)=L(f,g)$. This, with \eqref{625}, implies \eqref{627}.\\
(iv) Let $s_1,s_2\in[0,1]$ be such that $s_1<s_2$. Since $s\longmapsto U_s(f,g)$ is point-wisely convex then we have
$$\frac{U_{s_2}(f,g)-U_{s_1}(f,g)}{s_2-s_1}\geq\frac{U_{s_1}(f,g)-U_{0}(f,g)}{s_1}.$$
By (iii) we have $U_{s_1}(f,g)-U_{0}(f,g)\geq0$ for any $s_1\in[0,1]$, since $U_0(f,g)=f!g$. Hence the desired result, so completing the proof.
\end{proof}

\begin{remark}
We left to the reader the task for formulating in an immediate way the analog of Theorem \ref{thU} when the two convex functionals $f$ and $g$ are replaced by two positive invertible operators $A$ and $B$, respectively.
\end{remark}

%\section*{Acknowledgement}
\section*{Acknowledgements}
The authors would like to thank the referees for their careful and insightful comments to improve our manuscript.
The author (S.F.) was partially supported by JSPS KAKENHI Grant Number 16K05257.

\end{document}